\def\R{\mathbb{R}}
\def\N{\mathbb{N}}
\def\del  {\partial}
\def\R{\mathbb{R}}
\def\N{\mathbb{N}}
 \def\dx{{\rm d}x}
\def\Tr{\operatorname{Tr}}
\newtheorem{remark}{\textbf{Remark}}
\theoremstyle{plain}
\newtheorem{proposition}{\textbf{Proposition}}
\newtheorem{theorem}{\textbf{Theorem}}
\newtheorem{lemma}{\textbf{Lemma}}
\newtheorem{definition}{\textbf{Definition}}
\newcommand{\bR}{\mathbb{R}}
\newcommand{\bC}{\mathbb{C}}
\newcommand{\Arg}{\operatorname{Arg}}
\DeclareMathOperator{\supp}{supp}
\author{
  {\normalsize Nabil Alami, Edward Lucyszyn, Raphaël Pain Dit Hermier, Anna Rozanova-Pierrat}\thanks{CentraleSup\'elec, Universit\'e Paris-Saclay, France.}
    		}
\title{Parametric shape optimization for the convected Helmholtz equation with a generalized Myers boundary condition}
\date{}
\begin{document}
\maketitle
\thispagestyle{fancy}

\begin{abstract}
\noindent We consider the convected Helmholtz equation with a generalized Myers boundary condition (a boundary condition of the second order) and characterize the set of physical parameters for which the problem is weakly well-posed. The model comes from industrial applications to absorb acoustic noise in jet engines filled with absorbing liners (porous material). The problem is set on a 3D cylinder filled with a $d$-upper regular boundary measure, with a real $1<d\le 2$.  This setup leads to a parametric shape optimization problem, for which we prove the existence of at least one optimal distribution for any fixed volume fraction of the absorbing liner on the boundary that minimizes the total acoustic energy on any bounded wavenumber range.
\end{abstract}

\begin{keywords}
 parametric optimization; generalized Myers boundary conditions; $d$-upper regular boundary measure.
\end{keywords}

\section{Introduction}	

We consider the convected Helmholtz equation~\eqref{EqHCphys} with a generalized version of Ingard-Myers boundary condition (a boundary condition of the second order, see~\eqref{EqBcGamma})~\cite{AUREGAN-2001} and start by characterizing the set of physical parameters for which the problem is well-posed on a 3D cylinder filled with a $d$-upper regular boundary measure, with a real $1<d\le 2$ (see~\eqref{defmu}). 
The model comes from industrial applications to absorb acoustic noise in jet engines filled with absorbing liners (porous material). 
A liner is a panel structure comprised of two layers: a top layer made of a porous material designed to absorb waves and a bottom layer made of an impervious (reflexive) material.

In the optimal energy absorption framework in aircraft engines, we address the following engineering problem: the existence of an optimal distribution of the liner of a small (to compare to the total engine's shape) fixed volume in the reflective material, minimizing the acoustical energy of the engine. The practical reason is to absorb the noise in the best way with a small quantity of a liner, which could be cheaper to compare to the reactor (of an initially fixed shape) entirely covered with liners. 

In this article, we show, by the techniques of parametric shape optimization, the existence of at least one optimal liner distribution for any fixed total volume, realizing the infimum of the acoustical energy (the minimum for the relaxation problem) inside of a cylindrical engine. This is the main result of the article. It is given in Theorem~\ref{ThOptShape} not only for a fixed noise wavenumber but also for a fixed wavenumber range. The studied shape of the reactor (see Fig.~\ref{cut}) is motivated by the physical experiment setup~\cite{RENOU-2011} in which the generalized Myers condition was initially introduced. On the boundary of the cylindrical engine, we fix a $d$-upper regular boundary measure, with a real $1<d\le 2$, previously used for the well-posedness of the model (see Theorem~\ref{th:well-posedness}). A typical example of such measure is the sum of the cylindrical Hausdorff surface measure and the Cantor set-type measure (see also Fig.~\ref{fig:cylinderconvergence} for a convergent sequence of domains having in the limit the cylindrical domain with this kind of boundary measure).
To our knowledge, the results on the well-posedness of this generalized Myers boundary condition (and in addition, in the presence of a $d$-upper regular boundary measure) and the optimal shape distribution of the liners have never been addressed before. However, the theoretical and numerical parametric shape optimization with different application goals is generally a very common subject as presented in~\cite{ALLAIRE-2007,ALLAIRE-1997,BANICHUK-1990,HASLINGER-2003,HENROT-2005,PIRONNEAU-1984, MAGOULES-2025}  and their references. For the geometrical shape optimization ($i.e.$ for the optimization of the boundary shape itself) for models with Robin-type conditions, we mainly refer to~\cite{BUCUR-2005,BUCUR-2016,HINZ-2021-1,HINZ-2023,MAGOULES-2021}


One of the well-known boundary conditions to model the sound interaction with the liner in the presence of uniform flow is the  Ingard–Myers boundary condition~\cite{INGARD-1959,MYERS-1980,RENOU-2011}, modeling the interaction of the acoustic wave with the lined wall. The Ingard-Myers boundary condition has been studied extensively primarily due to its significant industrial applications, particularly in minimizing acoustic noise in jet engines~\cite{REDON-2011,MATHEWS-2018}.   For instance, aircraft engines employ acoustic liners on the inner walls of the engine nacelle to reduce engine noise. These liners utilize the Helmholtz resonance principle to dissipate incoming acoustic energy~\cite{MA2020}.
However, several papers which describe its failures to accurately predict the liner's behavior have been published since the early 2000s. For example, theoretical evidence by Brambley~\cite{BRAMBLEY-2013} and experimental evidence by Renou and Aurégan~\cite{RENOU-2011}, showing discrepancies between downstream and upstream wave numbers, as well as significant differences between measured and predicted scattering matrices using the Myers-Ingard condition, further demonstrate its inadequacy for ducts with uniform flow assumptions. Particularly, viscous and turbulent effects near the wall can affect this boundary condition, especially at very low frequencies~\cite{AUREGAN-2001}. Other works have shown its instability and problem with convergence in the time domain~\cite{BRAMBLEY-2012,BRAMBLEY-2009}. A modified equation, which we called here by the generalized Myers condition, is introduced by Y. Renou and Y. Aurégan in~\cite{RENOU-2011} with an additional parameter $\beta_v$, which models the transfer of momentum into the lined wall induced by molecular and turbulent viscosities (see~\eqref{EqBcGamma}). In contrast to \cite{MAGOULES-2025}, the fluid motion follows the tangential direction to the boundary, which is not a favored direction for the absorption situation compared with the normal incidence case, following the famous Bardos-Lebeau-Rauch geometrical rays approach~\cite{BARDOS-1992,BARDOS-1994}. The model with $\beta_v=0$ corresponds exactly to the Ingard-Myers condition and was previously considered in 2D case~\cite{LUNEVILLE-2014}. We partially use it here for the well-posedness of the generalized model.
The well-posedness of plane models involving the convected Helmholtz equation with different boundary conditions of the first order is also well-known~\cite{BECACHE-2004,BONNET-2002,LUNEVILLE-2002}.


%
%
%

 The work with a class of $d$-upper regular boundary requires proper frameworks such as definitions of the trace operator and Green's formula~\cite{MAGOULES-2025,ROZANOVA-PIERRAT-2021,HINZ-2021,MAGOULES-2021}, presented in Section~\ref{sectionmeasure} in order to establish the variational formulation, obtained in details in~Appendix~\ref{sec:variational formula}.

We note that the second-order boundary condition using the external parameter $\beta_v$ introduces more difficulty. More experiments are needed to provide benchmark data on this $\beta_v$ factor~\cite{RENOU-2011}, not yet well known experimentally. By our well-posedness result in Theorem~\ref{th:well-posedness}, we provide benchmark values for the parameter $\beta_v$ for different behaviors of the liner's physical properties (impedance) in a specific case. In~Appendix~\ref{AppLimitGr}, we consider the limit behavior of admissible values of $\beta_v$ for well-posedness in the case where the imaginary part of the liner's admittance dominates its real part.

Once the weak well-posedness of the convected Helmholtz equation with a generalized version of Ingard-Myers boundary condition is established we address the parametric shape optimization problem and prove our main result of the existence of at least one optimal distribution of liners with a small total volume, realizing the infimum of the acoustical energy on any bounded segment of wavenumbers thanks to a relaxation method and the result on the energy continuity.

The outline of this paper is as follows. Section~\ref{sec2} introduces the physical model described by the convected Helmholtz equation with the generalized Myers boundary condition. In Section~\ref{sectionmeasure}, we introduce the functional framework allowing us to consider $d$-upper regular boundary measures and the main hypothesis for the well-posedness of the model. 
Section~\ref{sec:wellposedness} is dedicated to prove the existence and unicity of the weak solution (the details on the variational formulation are given in~Appendix~\ref{sec:variational formula}), while also providing a characterization of the values of the parameter \(\beta_v\) that ensures well-posedness (this part is completed in~Appendix~\ref{sec:variational formula}). Finally, in Section~\ref{sec:shapeoptimization}, we deal with the parametric shape optimization approach to demonstrate the existence of an optimal liner distribution of a small fixed quantity, minimizing the acoustic energy on all wavenumber bounded intervals.

\section{Model with generalized Myers boundary conditions}\label{sec2}
To model the wave propagation in the reactor, we define the cylindrical domain $\Omega \subseteq \bR^3$ in our study to be the following: if $\mathcal{B}(h,R)$ denotes the open ball in $\bR^2$ of center $h\in \bR^2$ and radius $R > 0$, then
\begin{equation}\label{EqOmega}
    \Omega := (0,L)\times \mathcal{B}(0_{\bR^2},R),
\end{equation}
where $L,R\in (0,+\infty)$ are arbitrary, and represent respectively the length and radius of the domain.

We then define the different parts of the boundary of $\Omega$ (see Fig.~\ref{cut}) 
\begin{equation}
    \label{Gammasdef}
    \Gamma_{in}:= \{0\}\times \overline{\mathcal{B}(0_{\bR^2},R)}, \quad \Gamma_{out}:= \{L\}\times \overline{\mathcal{B}(0_{\bR^2},R)}, \quad \Gamma:= [0,L]\times \del \mathcal{B}(0_{\bR^2},R). 
\end{equation}


\begin{figure}[h]
    \centering
    
        \centering
        \includegraphics[width=0.7\textwidth]{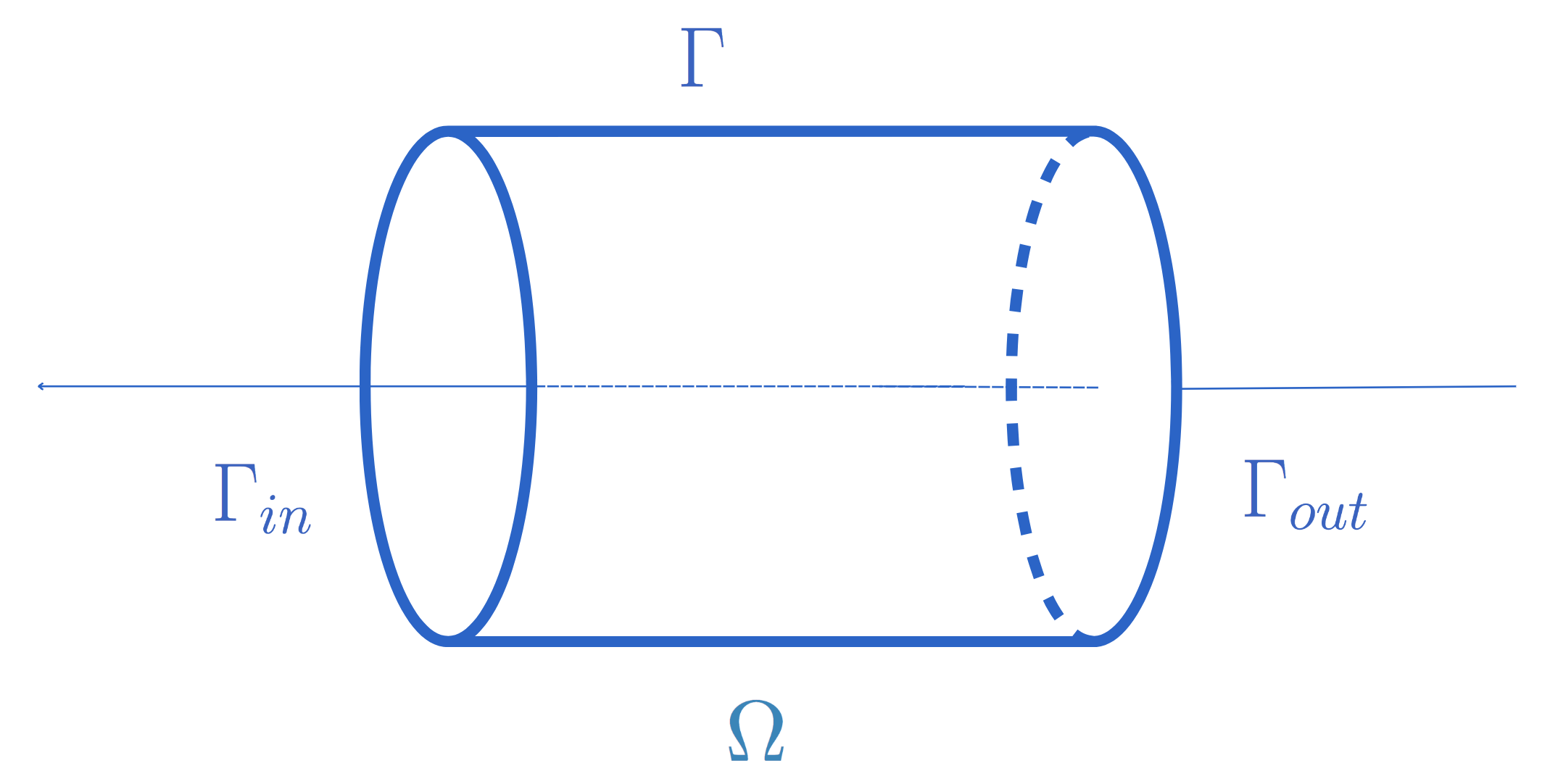}
    \caption{Cylinder modeling the reactor:  The boundary parts $\Gamma_{in}$ and $\Gamma_{out}$ represent respectively the zones of air inlet and outlet in the reactor. The boundary part $\Gamma$ represents the wall where the liner is located.} \label{cut}
\end{figure}
 The boundary parts $\Gamma_{in}$ and $\Gamma_{out}$ represent respectively the zones of air inlet and outlet in the reactor. The boundary part $\Gamma$ represents the wall where the liner is located. In the presence of a uniform flow along the principal axis $x$ of the cylinder, the perturbed pressure $p$ of an acoustic wave around a constant state in the harmonic regime is supposed to satisfy the following convected Helmholtz equation:
 \begin{equation}\label{EqHCphys}
 	 \Delta  p  + k_0^2\left(1-\frac{iM_0}{k_0}\del_x\right)^2 p = 0, \quad (x,y,z)\in \Omega. 
 	 \end{equation}
 Here $\Delta=\del^2_x+\del^2_y+\del^2_z$ is the usual Laplacian, 
$M_0$ is the Mach number, $k_0$ is the wave number of the acoustical wave. 
Eq.~\eqref{EqHCphys} is  the harmonic regime linear approximation of the Euler system for an adiabatic and incompressible fluid flow in the presence of a constant uniform flow along $x$-axis.
In what follows we use the notation
\begin{equation}\label{eqD}
    \displaystyle \mathcal{D} = k_0\left(1 - \frac{iM_0}{k_0}\partial_x\right),
\end{equation} 
to rewrite the convected Helmholtz equation in the form
 \begin{equation}\label{EqHC}
 	 \Delta  p  +  \mathcal{D}^2 p = 0, \quad (x,y,z)\in \Omega.
 	 \end{equation}
The inflow condition on $\Gamma_{in}$ reads by
\begin{equation}\label{EqGin}
	p|_{\Gamma_{in}}=g
\end{equation}
for some source $g$ modeling the incoming reactor noise, and the outflow condition on $\Gamma_{out}$ is given by the usual absorbing impedance condition 
\begin{equation}\label{EqGout}
	\left.\frac{\del p}{\del n}+ik p\right|_{\Gamma_{out}}=0,
\end{equation}
where $k=\displaystyle \frac{\omega}{u_0}$ is the wave number for the fluid with $u_0>0$ the constant velocity of the fluid along $x$-axis.
The boundary condition modeling the interaction with the liner on $\Gamma$ is given~\cite[Eq.~(24)]{RENOU-2011} by the following generalized Myers condition:
\begin{equation}\label{EqGMBC}
    \frac{\partial p}{\partial n} =\frac{\Upsilon}{i\omega }\bigg(i\omega +(1-\beta_v)u_0\frac{\partial}{\partial x}\bigg)\bigg(i\omega +u_0\frac{\partial}{\partial x}\bigg)p,
    \end{equation}
where $\displaystyle \Upsilon = \frac{Z_0}{c_0Z}$, $c_0$ is the sound speed in the fluid, $Z_0  > 0$ is the fluid impedance and $Z$ is the impedance of the liner, supposed here to be a known complex-valued function of the frequency $\omega$ with a strictly positive real part.
 In Eq.~\eqref{EqGMBC}, $\beta_v$ is a complex number with modulus strictly less than $1$ that models the transfer of momentum to the wall with the liner caused by kinematic and turbulent viscosities. With $\beta_v = 0$, we recover the Ingard-Myers condition. The presence of this complex coefficient experimentally improves the mathematical model~\cite{RENOU-2011}. We notice that the boundary condition~\eqref{EqGMBC}, as the convected Helmholtz equation, mainly depends on the moving properties of the fluid along the $x$-axis, which is one of the tangential directions to the boundary. 
However, the relevant values of $\beta_v$ are not yet estimated experimentally~\cite{RENOU-2011} and we give them in Theorem~\ref{th:well-posedness}.


With our notation~\eqref{eqD}, the boundary condition~\eqref{EqGMBC} on $\Gamma$ becomes:
\begin{equation}\label{EqBcGamma}
    \displaystyle \frac{\partial p}{\partial n} + iY\frac{Z_0}{k_0} \mathcal{D}(\mathcal{D} + iM_0\beta_v\partial_x) p  = 0\text{ on }\Gamma
\end{equation}
which is, as mentioned previously, the usual Ingard-Myers condition when $\beta_v=0$. Here we denote by $\displaystyle Y = \frac{1}{Z}$ the complex-valued admittance of the liner with the real part $\Re e(Y)>0$ strictly positive. As $Z$, the admittance $Y$ also can take different values depending only on $w$.

We notice the following decomposition of the second order operator in~\eqref{EqBcGamma}:
\begin{equation} \label{squarediff}\mathcal{D}(\mathcal{D} + iM_0\beta_v\partial_x) = \mathcal{D}_1^2-K^2\end{equation}
 by defining the notations
\begin{equation} \label{D1andK} \mathcal{D}_1:=\alpha \Big(\frac{k_0}{2}\frac{2-\beta_v}{1-\beta_v}-iM_0\partial_x\Big)\hspace{2pt}; \quad
\hspace{2pt}K:= k_0\frac{\beta_v}{2\alpha}\hspace{2pt}\end{equation}
with $\alpha^2 = 1-\beta_v$ and $\arg(\alpha)\in [0,\pi)$. In fact, this decomposition will be useful
for finding an adequate Fredholm-type decomposition (described in subsection~\ref{sub:fredholm}) to our variational formulation.

Let $\mathbf{x}\in \Gamma$. To model the partial presence of liners on $\Gamma$, we define the distribution of the liner on $\Gamma$ by the characteristic function $\chi: \Gamma \to \{0,1\}$, with $\chi(\mathbf{x}) = 1$ if the liner is at $\mathbf{x}$, and $\chi(\mathbf{x}) = 0$ otherwise. Therefore, instead of~\eqref{EqBcGamma}  modeling the presence of the liner on all shape of $\Gamma$, we consider
\begin{equation}\label{EqBcGammaChi}
    \displaystyle \frac{\partial p}{\partial n} + i\chi Y\frac{Z_0}{k_0} \mathcal{D}(\mathcal{D} + iM_0\beta_v\partial_x) p  = 0\text{ on }\Gamma.
\end{equation}
If for $\mathbf{x}\in \Gamma$ $\chi(\mathbf{x})=0$, then condition~\eqref{EqBcGammaChi} for this boundary point becomes the homogeneous Neumann condition $\displaystyle \frac{\partial p}{\partial n}=0$ and hence, this means the reflection in the liner absence. 

In the next section we define the weak framework for the introduced model and precise in which sense we understand the boundary conditions~\eqref{EqGin},~\eqref{EqGout} and~\eqref{EqBcGammaChi}. In what follows, we fix all introduced previously physical constants 
\begin{equation}\label{EqPhysConstants}
	\omega > 0,\; u_0 > 0,\; c_0 > 0,\; Z_0 > 0, \;Z \in \{z \in \mathbb{C} \hspace{2pt} | \hspace{2pt} \Re e(z) > 0\},\;M_0,\;\displaystyle k_0\; \hbox{and } k.
\end{equation}

\section{Functional framework} 
\label{sectionmeasure}
\noindent As $\Omega$ is the cylindrical domain defined previously, its  boundary $\partial\Omega$ is Lipschitz, which is an example of a $2$-(Alfors regular) set~\cite{LUNEVILLE-2014}. The typical measure on Lipschitz boundaries of a domain of $\R^3$ would be $2$-dimensional Lebesgue or Hausdorff measure. Instead of it,  we consider more general boundary measures. For a real $d\in (1,3)$ we fix a $d$-upper regular positive Borel measure $\mu$ on the boundary $\del \Omega$, that is a measure $\mu$ on $\bR^3$ which satisfies:
\begin{equation}
    \label{defmu}
    \begin{cases}
        \text{supp }\mu = \partial \Omega,\\
        \exists A > 0, \forall \mathbf{x}\in \partial \Omega, \forall r \in (0,1], \; \mu(\mathcal{B}(\mathbf{x},r)) \leq Ar^d.
    \end{cases}
\end{equation}
Condition~\eqref{defmu} implies that the Hausdorff dimension of $\del \Omega$ must be bigger or equal to $d$. Therefore, for our cylindrical case, we consider only $d\in(1,2]$. 
Let us recall that a particular example of a $d$-upper regular Borel measure is a $d$-measure (or $d$-dimensional measure) satisfying 
in addition the lower regularity property with the same $d$: there exist $A$ and $B>0$ such that
\begin{equation*}
     \forall \mathbf{x}\in \partial \Omega, \forall r \in (0,1], \; Br^d\le \mu(\mathcal{B}(\mathbf{x},r)) \leq Ar^d.
\end{equation*}
Example of a $2$-upper regular measure $\mu$ with $\operatorname{supp} \mu =\del \Omega$ is the sum of the $2$-dimensional Hausdorff measure of $\del \Omega$ and  a $d$-dimensional measure with $\displaystyle d \in (1,2)$ with a support included in $\del \Omega$. 
The resulting sum is thus a $2$-upper regular measure thanks to the following proposition.
\begin{proposition}
    \label{upperprop}
   Let $F$ be a Borel set of $\R^n$, $\mu$ be a measure on $\bR^n$ with $\text{supp }\mu = F$, and $d_1 < d_2 \in (n-2,n)$. If $\mu$ is a $d_2$-upper regular measure for $F$, then it is a $d_1$-upper regular measure.
\end{proposition}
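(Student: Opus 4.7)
The statement reduces to a one-line monotonicity argument exploiting the restriction $r\in (0,1]$, so the plan is essentially trivial. The hypothesis $d_1 < d_2$ in $(n-2,n)$ is presumably stated to keep both exponents within the range where the notion of $d$-upper regular measure is used meaningfully elsewhere in the paper, but the actual inequality requires only $d_1 < d_2$.

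The plan is as follows. Assume $\mu$ is $d_2$-upper regular, so there exists $A>0$ with $\mu(\mathcal{B}(\mathbf{x},r))\le A r^{d_2}$ for every $\mathbf{x}\in F$ and every $r\in (0,1]$. I would then observe that for $r\in (0,1]$ and $d_2-d_1>0$ one has $r^{d_2-d_1}\le 1$, so
\[
r^{d_2} = r^{d_1}\cdot r^{d_2-d_1} \le r^{d_1}.
\]
Multiplying by the constant $A$ and chaining yields
\[
\mu(\mathcal{B}(\mathbf{x},r)) \le A r^{d_2} \le A r^{d_1}
\]
for every $\mathbf{x}\in F$ and every $r\in (0,1]$, which is exactly the $d_1$-upper regularity condition with the same constant $A$.

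There is no genuine obstacle here; the only thing worth noting is that the hypothesis $r\le 1$ is used in an essential way, since for $r>1$ the reverse inequality $r^{d_1}\le r^{d_2}$ would hold and the conclusion would fail. The assumption $\operatorname{supp}\mu = F$ is carried through unchanged, since the support condition is part of the definition of $d$-upper regularity and is not affected by the exponent.
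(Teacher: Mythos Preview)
Your proof is correct and follows exactly the same approach as the paper: both use the elementary observation that $r^{d_2-d_1}\le 1$ for $r\in(0,1]$ and $d_2>d_1$ to chain $\mu(\mathcal{B}(\mathbf{x},r))\le A r^{d_2}\le A r^{d_1}$ with the same constant $A$.
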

\begin{proof}
    Suppose $\mu$ is a $d_2$-upper regular measure for $F$, and let $A > 0$ be the constant from \eqref{defmu} given by $d_2$-upper regularity. Then, for any $\mathbf{x}\in F$ and $r \in (0,1]$, we have
$\mu(\mathcal{B}(\mathbf{x},r)) \leq Ar^{d_2} \leq Ar^{d_1}$, since $\displaystyle \frac{r^{d_2}}{r^{d_1}} = r^{d_2-d_1} \in (0,1]$ because $d_2-d_1 > 0$.
\end{proof}
As shown below in Fig.~\ref{fig:cylinderconvergence}, one could consider for example a sequence of $C^\infty$ domains with boundaries equipped with the usual $2$-dimensional Hausdorff measure, converging to our domain $\Omega$ whose boundary is equipped with a measure different from the $2$-dimensional Hausdorff measure. This measure would here be the sum of the $2$-dimensional Lebesgue measure and a $d$-dimensional measure, with $\displaystyle d = 1+\frac{\log(2)}{\log(3)} \in (1,2)$, with support being the revolution of a scaled Cantor set along the $x$ axis. The resulting sum is thus a $d$-upper regular measure thanks to Proposition~\ref{upperprop}.

\begin{figure}[!ht]
    \centering
    \includegraphics[width=1\linewidth]{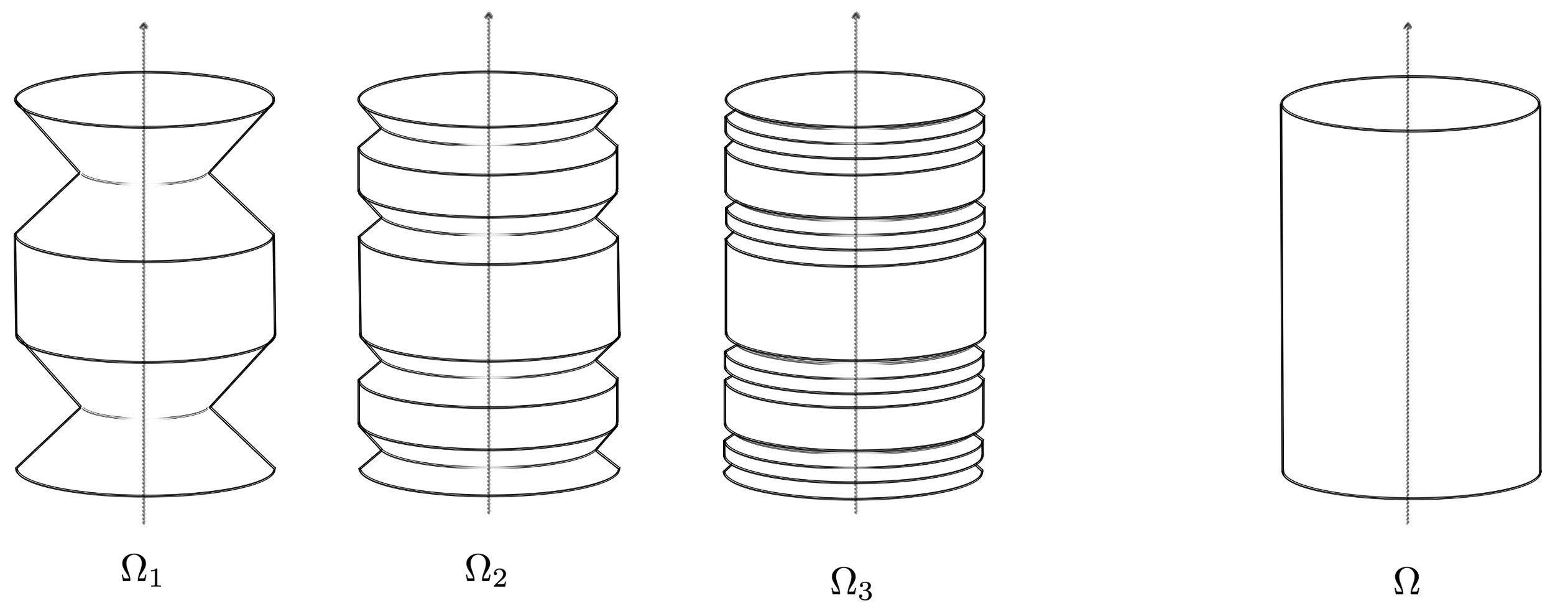}
    \caption{An example sequence of Lipschitz 2D boundaries $(\Omega_n)_{n\in\mathbb{N}}$ converging to the cylinder $\Omega$ not equipped with $2$-dimensional Hausdorff measure.}
    \label{fig:cylinderconvergence}
\end{figure}
Equipped with a fixed $d$-upper regular Borel measure $\mu$ on the boundary for $d\in (1,2]$ with $\operatorname{supp} \mu =\del \Omega$, we define
the space $L^2(\del \Omega,\mu)$ as the space of measurable functions on $\del \Omega$ such that $\|h\|_{L^2(\del \Omega,\mu)}=\sqrt{\int_{\del \Omega} |h|^2d \mu}$ is finite.
\begin{remark}
This pair $(\Omega,\mu)$ is thus a particular example of a Sobolev admissible domain, defined~\cite{HINZ-2021,MAGOULES-2025} as a
bounded domain (open and connected set) $\Omega\subset \R^n$, (here $n=3$), which are 
\begin{enumerate}
	 \item[(a)]  $H^1$-extension domains (the cylindrical domain is a Lipschitz domain and thus it is $H^1$-extension domain);
	\item[(b)] its boundary $\del \Omega$ is the support of finite positive Borel $d$-upper regular measure for a fixed real number $d\in (n-2,n)$.
\end{enumerate}
\end{remark}
Then we suppose 
that 
\begin{equation}\label{EqDivisionDelOmega}
 	\mu(\Gamma_{out}\cap \Gamma)=\mu (\Gamma_{in}\cap \Gamma)=\mu(\Gamma_{in}\cap \Gamma_{out})=0,
 \end{equation}
and  $\Gamma_{out}$, $\Gamma_{in}$ and $\Gamma$ are closed subsets of $\del \Omega$.
As $\Gamma$ is composed of a liner and reflexive parts (see~\eqref{EqBcGammaChi}), to avoid degenerate cases,  we suppose that each part of $\Gamma$, with a liner or without, has positive capacity with respect to the space $H^1(\mathbb{R}^3)$ (see for instance~\cite[Section 7.2]{MAZ'JA-1985}) and has a strictly positive value of the measure $\mu$. Up to a zero $\mu$-measure set, the part of $\Gamma$ filled with the liner/porous material can be considered as its compact subset. 

The assumptions that $\Gamma_{out}$, $\Gamma_{in}$ and $\Gamma$ and its liner part are closed in the induced topology on $\del \Omega$ ensure that the linear trace operators $\operatorname{Tr}_{\Gamma_{out}}: H^1(\Omega)\to L^2(\Gamma_{out},\mu)$, $\operatorname{Tr}_{\Gamma_{in}}: H^1(\Omega)\to L^2(\Gamma_{in},\mu)$ and $\operatorname{Tr}_{\Gamma}: H^1(\Omega)\to L^2(\Gamma,\mu)$ 
 are compact (for their definitions see~\cite{HINZ-2021,HINZ-2021-1,HINZ-2023} initially adopted from~\cite[Corollaries 7.3 and 7.4]{BIEGERT-2009} and based on the restriction of  quasi-continuous representatives of $H^1(\R^3)$-elements.

 The basic properties of the trace operator are presented in~\cite[Corollary~5.2]{HINZ-2021} and detailed in~\cite{CLARET-2023}. As we work in the particular case of a bounded Sobolev admissible domain, then we also have the following compactness result:
 \begin{theorem}
 	Let $\Omega\subset \R^3$ be the bounded cylindrical domain defined in~\eqref{EqOmega} and $\mu$ be a fixed $d$-upper regular positive Borel measure with $\operatorname{supp}\mu=\del \Omega$ and $d\in (1,2]$.
Then the image of the trace operator $B(\del \Omega):=\operatorname{Tr}_{\partial \Omega}(H^1(\Omega))$ endowed with the norm
\begin{equation}\label{normTri}
\left\|h\right\|_{B(\del \Omega)}:=\min\{ \left\|v\right\|_{H^1(\Omega)}|\ h=\mathrm{Tr}_{\partial\Omega}\:v\},
\end{equation}
is a Hilbert space, dense and compact in $L^2(\del \Omega,\mu)$.
 \end{theorem}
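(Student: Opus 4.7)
The plan is to establish the three assertions—Hilbert space structure, density and compactness—separately, relying on the definition of $B(\del\Omega)$ as an image equipped with a quotient-type norm and on the compactness of the component trace operators already recalled before the statement.

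\emph{Hilbert space structure.} I would first set $N:=\Ker(\operatorname{Tr}_{\partial\Omega})$, which is closed in $H^1(\Omega)$ because $\operatorname{Tr}_{\partial\Omega}$ is continuous into $L^2(\del\Omega,\mu)$. The quotient $H^1(\Omega)/N$, with the quotient norm $\|[v]\|=\inf_{w\in N}\|v+w\|_{H^1(\Omega)}$, is then a Hilbert space. The trace induces a canonical linear bijection $H^1(\Omega)/N\to B(\del\Omega)$, and by the very definition~\eqref{normTri} this bijection is an isometry. Since the infimum in~\eqref{normTri} is attained at the orthogonal projection of any preimage onto $N^\perp$, the term ``minimum'' in the statement is justified, and the Hilbert space structure transports to $B(\del\Omega)$.

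\emph{Density.} I would next observe that restrictions to $\overline\Omega$ of smooth functions on $\R^3$ lie in $H^1(\Omega)$ and that for such continuous representatives the abstract trace $\operatorname{Tr}_{\partial\Omega}$ coincides with pointwise restriction to $\del\Omega$ (this is where the quasi-continuous representative construction of~\cite{HINZ-2021,CLARET-2023} is invoked). Hence $\{\varphi|_{\del\Omega}:\varphi\in C^\infty(\R^3)\}\subset B(\del\Omega)$. Because $\del\Omega$ is compact and $\mu$ is a finite Radon measure on it, $C(\del\Omega)$ is dense in $L^2(\del\Omega,\mu)$, and the Stone--Weierstrass-type approximation of continuous functions on $\del\Omega$ by polynomials then yields the density of $B(\del\Omega)$ in $L^2(\del\Omega,\mu)$.

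\emph{Compactness.} Given a bounded sequence $(h_n)_{n\in\N}$ in $B(\del\Omega)$, I would pick the minimizing preimages $v_n\in H^1(\Omega)$ from the previous step, so that $\operatorname{Tr}_{\partial\Omega}v_n=h_n$ and $\|v_n\|_{H^1(\Omega)}=\|h_n\|_{B(\del\Omega)}$ is bounded. Using~\eqref{EqDivisionDelOmega} to identify $L^2(\del\Omega,\mu)$ with the orthogonal direct sum $L^2(\Gamma_{in},\mu)\oplus L^2(\Gamma_{out},\mu)\oplus L^2(\Gamma,\mu)$, the combined trace $\operatorname{Tr}_{\partial\Omega}:H^1(\Omega)\to L^2(\del\Omega,\mu)$ inherits compactness from the three compact operators $\operatorname{Tr}_{\Gamma_{in}}$, $\operatorname{Tr}_{\Gamma_{out}}$ and $\operatorname{Tr}_{\Gamma}$ recalled earlier. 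Extracting a subsequence of $(v_n)$ whose trace converges in $L^2(\del\Omega,\mu)$ produces a convergent subsequence of $(h_n)$, which is the desired compact embedding. The only subtle ingredient I foresee is the identification in the density step between the abstract $H^1$-trace built via quasi-continuous representatives and the classical restriction on smooth functions; this is a standard but nontrivial fact from potential theory, which is precisely why the results of~\cite{HINZ-2021,CLARET-2023} are cited just above.
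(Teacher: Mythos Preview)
The paper does not supply its own proof of this theorem; it is stated as a direct consequence of the references~\cite{HINZ-2021,CLARET-2023} and~\cite[Corollaries~7.3 and~7.4]{BIEGERT-2009} recalled immediately before, so there is no argument in the paper to compare against. Your proposal is correct and fills in precisely the details those citations encapsulate.

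One small remark on the compactness step: you invoke~\eqref{EqDivisionDelOmega} to split $L^2(\del\Omega,\mu)$ into three orthogonal pieces and then assemble the compact partial traces $\operatorname{Tr}_{\Gamma_{in}}$, $\operatorname{Tr}_{\Gamma_{out}}$, $\operatorname{Tr}_{\Gamma}$. But~\eqref{EqDivisionDelOmega} is an \emph{additional} assumption introduced in the paper for later use and is not among the hypotheses of the theorem as stated. The detour is unnecessary anyway: since $\del\Omega$ is itself closed, the same cited results give compactness of the full trace $\operatorname{Tr}_{\del\Omega}:H^1(\Omega)\to L^2(\del\Omega,\mu)$ directly, and you can extract the convergent subsequence of $(h_n)$ in one step without decomposing the boundary.
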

Let us notice that the definition of the image of the trace does not depend on the choice of the boundary measure. In particular, for the cylindrical domain $\Omega$ the norm $\|\cdot\|_{B(\del \Omega)}$ is equivalent to the norm $\|\cdot\|_{H^\frac{1}{2}(\del \Omega)}$. The measure dependence comes in the $L^2$-boundary framework.
In particular, it is also important in the usual Gelfand triple
 $B (\del \Omega)\subset L^2(\del \Omega,\mu) \subset B'(\del \Omega)$, where by  $B'(\del \Omega)$ is denoted the topological dual space of $B(\del \Omega)$.
 This construction allows us to define the normal derivative in the general sense:
 \begin{definition}
 	For all \( u \in H^1(\Omega) \) with \( \Delta u \in L^2(\Omega) \), the bounded linear functional \( \displaystyle \frac{\partial u}{\partial n} \in B'(\partial \Omega) \) is called the normal derivative of $u$ on $\del \Omega$ if it is defined for all $v\in H^1(\Omega)$ by
\begin{equation}
\label{greenformula}
\left\langle \frac{\partial u}{\partial n}, \operatorname{Tr}_{\partial \Omega} v \right\rangle_{(B'(\partial \Omega), B(\partial \Omega))} := \int_{\Omega} (\Delta u) v \, dx + \int_{\Omega} \nabla u \cdot \nabla v \, dx.
\end{equation}
This is the
generalized Green formula.
Similarly, for all $u\in H^1(\Omega)$ we define a bounded linear functional $u\cdot n_x\in B'(\del \Omega)$ for all $v\in H^1(\Omega)$ by
\begin{equation}\label{EqGFnx}
	\left\langle u\cdot n_x, \operatorname{Tr}_{\partial \Omega} v \right\rangle_{(B'(\partial \Omega), B(\partial \Omega))} := \int_{\Omega} \del_x u v \, dx + \int_{\Omega} u \del_x v \, dx.
\end{equation}
 \end{definition}
 \begin{remark}
\label{remarkregularity}
 If the normal derivative is more regular as just $B'(\del \Omega)$, but belongs to $L^2(\del \Omega,\mu)$, for instance, by the impedance boundary condition on $\Gamma_{out}$,
 $\displaystyle \frac{\partial p}{\partial n} =- ik \Tr_{\Gamma_{out}}p \in L^2(\Gamma_{out},\mu)$, then we have $$\left\langle\frac{\partial u}{\partial n}, \operatorname{Tr}_{\partial \Omega} v \right\rangle_{(B'(\partial \Omega), B(\partial \Omega))}=\int_{\del \Omega}\frac{\partial u}{\partial n}\operatorname{Tr}_{\partial \Omega} \bar v d \mu.$$

Similarly, if the functional $u\cdot n_x$ is more regular as just $B'(\del \Omega)$, but belongs to $L^2(\del \Omega,\mu)$, and if $\partial \Omega$ is Lipschitz, then $n_x \in L^\infty(\partial \Omega, \mu)$ can also be interpreted as the normal vector component along the $x$ axis, and we have
$$\left\langle u\cdot n_x, \operatorname{Tr}_{\partial \Omega} v \right\rangle_{(B'(\partial \Omega), B(\partial \Omega))} = \int_{\partial \Omega} \Tr_{\partial \Omega}u \Tr_{\partial \Omega} \bar v \cdot n_x \, d\mu.$$
\end{remark}

\section{Well-posedness of the model}
\label{sec:wellposedness}
In this section, we  prove  the weak well-posedness of the introduced model, using the Fredholm alternative and updating the usual methodology~\cite{MAGOULES-2025,BONNET-2002,LUNEVILLE-2014}. 
Instead of non-homogeneous Dirichlet boundary conditions, we consider, after the standard removal method, the following problem with the non-homogeneous source terms $f\in L^2(\Omega)$ and $\eta \in L^2(\Gamma,\mu)$:
\begin{equation}
	 \begin{cases}
    \displaystyle \Delta  p  + \mathcal{D}^2 p = f \in L^2(\Omega),
    \\
    \displaystyle \frac{\partial p}{\partial n} + iY\frac{Z_0}{k_0}\chi \Tr_\Gamma \Bigl[ \mathcal{D}(\mathcal{D} + iM_0\beta_v\partial_x) p \Bigr] = \eta ,
    \\
    \displaystyle \Tr_{\Gamma_{in}}p = 0,
    \\
    \displaystyle \frac{\partial p}{\partial n} + ik \Tr_{\Gamma_{out}}p = 0.
    \end{cases} \label{PH}
\end{equation}
Here, $\chi\in L^\infty(\Gamma,\mu)$ be a nonnegative and bounded Borel function on $\Gamma$ which is positive with a positive minimum on a subset positive $\mu$-measure. 
we define the sesquilinear form:
\begin{center}
\begin{equation}
\label{chiproduct}
\langle \cdot , \cdot \rangle_{\chi}: \begin{cases}(L^2(\Gamma,\mu))^2 \longrightarrow  \mathbb{C} \\
  (g, g')  \longmapsto  \displaystyle \int_\Gamma \chi g \overline{g'} d\mu\end{cases}
\end{equation}
\end{center}
as well as the space
\begin{equation}
    V(\Omega) = \{q \in H^1(\Omega) \mid \Tr_{\Gamma_{in}}q = 0, \, \mathcal{D}_1q \in H^1(\Omega) \} \label{Space},
\end{equation}
endowed with the norm
\begin{equation} \label{Vnorm}\|p\|^2_{V(\Omega)} = \|\nabla p\|^2_{(L^2(\Omega))^3}+ \|\Tr_\Gamma(\mathcal{D}_1p)\|^2_{\chi}.\end{equation}
Here $\mathcal{D}_1$ is the differential operator defined in~\eqref{D1andK}.
When deemed appropriate (for example when dealing with multiple distributions $\chi$), we write the previous norm as $\|\cdot\|_{V(\Omega),\chi}$ to point out the dependence in $\chi$.

Additionally, we  show that $V(\Omega)$ is a Hilbert space by proving that it is the space of weak solutions of the following boundary-value problem: 
\[
    (P_{h,\varphi,\psi}) \hspace{2pt} : \hspace{2pt}
    \begin{cases}
    \displaystyle \partial_x^2 q = h \in L^2(\Omega),
    \\
    \displaystyle \displaystyle  \partial_x q\cdot n_x - iY\frac{Z_0}{k_0}K^2\chi \Tr_{\Gamma}  q  = \varphi\in L^2(\Gamma,\mu), 
    \\
    \displaystyle \Tr_{\Gamma_{in}}q = 0,
    \\
    \displaystyle \frac{\partial q}{\partial n} + ik \Tr_{\Gamma_{out}}q = \psi\in L^2(\Gamma_{out},\mu).
    \end{cases} \label{Pgphipsi}
\]
The weak solutions of~\eqref{Pgphipsi} satisfy the variational formulation on $H^1_{\Gamma_{in}}(\Omega)=\{v\in H^1(\Omega)|\, \Tr_{\Gamma_{in}}v=0\}$, denoted by $(FV_{h,\varphi,\psi})$: 
\begin{multline*}
 \forall v\in H^1_{\Gamma_{in}}(\Omega),\; (\partial_xq,\partial_x v)_{L^2(\Omega)} +iY\frac{Z_0}{k_0}K^2\langle\Tr_\Gamma  q,\Tr_\Gamma v\rangle_{\chi}-ik(\Tr_{\Gamma_{out}} q, \Tr_{\Gamma_{out}} v)_{L^2(\Gamma_{out},\mu)}\\ = -(h,v)_{L^2(\Omega)} + (\varphi,\Tr_{\Gamma} v)_{L^2(\Gamma,\mu)} + (\psi,\Tr_{\Gamma_{out}} v)_{L^2(\Gamma_{out},\mu)},
 \end{multline*}
 which is well-posed for any triple $(h,\varphi,\psi)\in L^2(\Omega)\times L^2(\Gamma,\mu) \times L^2(\Gamma_{out},\mu)$. This ensures
\begin{align}
\label{VomegaFV}
\begin{split}
V(\Omega) = &\Big\{q\in H^1(\Omega) \mid \, \mathcal{D}_1q \in H^1(\Omega), \Tr_{\Gamma_{in}}q = 0,\\
&\exists (h,\varphi,\psi)\in L^2(\Omega)\times L^2(\Gamma,\mu) \times L^2(\Gamma_{out},\mu), \, q \text{ verifies } (FV_{h,\varphi,\psi})\Big\}    
\end{split}
\end{align}
is a closed subset of $H^1(\Omega)$, thus a Hilbert space.

\begin{proposition}[Variational Formulation]
\label{prop:FV}
The variational formulation associated with \eqref{PH} is given by
\begin{equation}
\label{eq:FV}
    \forall q\in V(\Omega), \ A(p,q) = \ell(q);
\end{equation}
where the forms $ A : V(\Omega)^2 \to \mathbb{C}$ and $ \ell : V(\Omega) \to \mathbb{C}   $ are defined by:
\begin{equation}\label{formL}\forall q\in V(\Omega), \, \ell(q) = (\eta,\Tr_\Gamma q)_{L^2(\Gamma,\mu)}-(f,q)_{L^2(\Omega)},\end{equation}
and $\forall p,q\in V(\Omega),$
\begin{align}
\label{formA}
\begin{split}
    A(p,q) = &\;(\nabla p,\nabla q)_{(L^2(\Omega))^3} - (\mathcal{D}p,\mathcal{D}q)_{L^2(\Omega)}+ik(\Tr_{\Gamma_{out}}p,\Tr_{\Gamma_{out}}q)_{L^2(\Gamma_{out},\mu)} \\
    &+ \ iY \frac{Z_0}{k_0}\Big[\langle \Tr_{\Gamma}(\mathcal{D}_1 p),\Tr_{\Gamma}(\mathcal{D}_1 q)\rangle_{\chi} - K^2\langle \Tr_{\Gamma}p,\Tr_{\Gamma}q \rangle_{\chi} \Big].
\end{split}
\end{align}
\end{proposition}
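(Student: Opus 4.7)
My plan is the standard variational derivation: multiply the PDE $\Delta p+\mathcal{D}^2 p=f$ of \eqref{PH} by $\bar q$ for $q\in V(\Omega)$, integrate over $\Omega$, and transfer derivatives onto the test function using the generalized Green formula \eqref{greenformula} for $\Delta p$ and the $x$-pairing \eqref{EqGFnx} applied twice to $\mathcal{D}^2 p=\mathcal{D}(\mathcal{D}p)$ (one integration by parts per factor of $\mathcal{D}=k_0-iM_0\partial_x$). This produces the identity
\begin{equation*}
    (\nabla p,\nabla q)_{(L^2(\Omega))^3} - (\mathcal{D}p,\mathcal{D}q)_{L^2(\Omega)}
    = -(f,q)_{L^2(\Omega)} + \langle \partial_n p, \Tr_{\partial\Omega}\bar q\rangle
    - iM_0 \langle \mathcal{D}p\cdot n_x, \Tr_{\partial\Omega}\bar q\rangle.
\end{equation*}

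I would then split the two boundary functionals according to $\partial\Omega=\Gamma_{in}\cup\Gamma\cup\Gamma_{out}$, a disjoint partition modulo $\mu$-null sets by \eqref{EqDivisionDelOmega}. The $\Gamma_{in}$ contribution vanishes thanks to $\Tr_{\Gamma_{in}}q=0$. On $\Gamma_{out}$ the outer normal is along the $x$-axis, so $\partial_n=\partial_x$ and $n_x=1$; combining \eqref{EqGout} with the physical identity $k_0=M_0 k$ (from $k_0=\omega/c_0$, $k=\omega/u_0$ and $M_0=u_0/c_0$) yields $\mathcal{D}p=0$ on $\Gamma_{out}$, so only the term $-ik(\Tr_{\Gamma_{out}}p,\Tr_{\Gamma_{out}}q)_{L^2(\Gamma_{out},\mu)}$ survives. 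On the lateral surface $\Gamma$ the outer normal is perpendicular to the $x$-axis, so $n_x=0$ eliminates the $\mathcal{D}p\cdot n_x$ functional; substituting the generalized Myers condition of \eqref{PH} together with the factorization \eqref{squarediff} rewrites the residual $\langle\partial_n p,\Tr_\Gamma\bar q\rangle$ as
\begin{equation*}
(\eta,\Tr_\Gamma q)_{L^2(\Gamma,\mu)}+iY\tfrac{Z_0}{k_0}K^2\langle\Tr_\Gamma p,\Tr_\Gamma q\rangle_\chi-iY\tfrac{Z_0}{k_0}\int_\Gamma\chi\,\mathcal{D}_1^2 p\,\Tr_\Gamma\bar q\,d\mu.
\end{equation*}

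The last step, which I expect to be the main obstacle, is to turn the remaining boundary integral $\int_\Gamma\chi\,\mathcal{D}_1^2 p\,\Tr_\Gamma\bar q\,d\mu$ into the sesquilinear form $\langle\Tr_\Gamma(\mathcal{D}_1 p),\Tr_\Gamma(\mathcal{D}_1 q)\rangle_\chi$ by moving one factor of $\mathcal{D}_1$ onto $\bar q$. Since $\mathcal{D}_1$ involves only $\partial_x$ and $\partial_x$ is tangential to the lateral surface $\Gamma$, this is morally a one-dimensional IBP in $x$ on $\Gamma$. The condition $\mathcal{D}_1 q\in H^1(\Omega)$ built into the definition \eqref{Space} of $V(\Omega)$ is precisely what ensures that $\Tr_\Gamma(\mathcal{D}_1 p)$ and $\Tr_\Gamma(\mathcal{D}_1 q)$ make sense as elements of $L^2(\Gamma,\mu)$, and the hypothesis \eqref{EqDivisionDelOmega} that $\Gamma\cap\Gamma_{in}$ and $\Gamma\cap\Gamma_{out}$ carry zero $\mu$-measure kills the boundary-of-$\Gamma$ contributions at $x=0$ and $x=L$. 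The delicate point is that $\mu$ need not be a product measure in the $x$-direction, so the IBP has to be justified weakly — for instance by approximating $p,q$ in $V(\Omega)$ by functions from $C^\infty(\overline{\Omega})$ where classical calculus applies, performing the IBP there, and passing to the limit using compactness of $\Tr_\Gamma\colon H^1(\Omega)\to L^2(\Gamma,\mu)$. Assembling the three boundary contributions then produces exactly $A(p,q)=\ell(q)$.
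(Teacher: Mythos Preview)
Your proposal is correct and follows essentially the same route as the paper's Appendix~A derivation: multiply by $\bar q$, integrate by parts using \eqref{greenformula} and \eqref{EqGFnx}, split the boundary via \eqref{EqDivisionDelOmega}, invoke $n_x|_\Gamma=0$, $n_x|_{\Gamma_{out}}=1$ and $k_0=M_0k$, and finish with the factorization \eqref{squarediff} followed by a tangential integration by parts of $\mathcal{D}_1^2$ on $\Gamma$. Your packaging of the $\Gamma_{out}$ contribution through the single observation $\mathcal{D}p|_{\Gamma_{out}}=0$ is a slightly cleaner version of the paper's term-by-term cancellation $iM_0(kM_0-k_0)=0$, and your explicit discussion of why the tangential IBP on $\Gamma$ is legitimate (density of smooth functions, compactness of $\Tr_\Gamma$, \eqref{EqDivisionDelOmega} killing the endpoint terms) actually goes further than the paper, which simply writes ``by integration by parts on the term $\mathcal{D}_1^2$'' without comment.
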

The proof of this proposition is given for the reader convenience in~Appendix~\ref{sec:variational formula}.
During the remainder of this section, we prove our first main result on the well-posedness of~\eqref{eq:FV}. 
\begin{theorem}[Weak well-posedness]
\label{th:well-posedness}
Let $\Omega\subset \R^3$ be the bounded cylindrical domain defined in~\eqref{EqOmega} and $\mu$ be a fixed boundary $d$-upper regular positive Borel measure for $d\in (1,2]$ and $\operatorname{supp}\mu=\del \Omega$. 
Assume $\del \Omega=\Gamma_{in}\cup\Gamma_{out}\cup \Gamma$ defined in~\eqref{Gammasdef} such that it holds~\eqref{EqBcGamma}. In particular, let $\Gamma$ be non trivial part of $\del \Omega : \mu(\Gamma)>0$, as well as the generalized Myers boundary condition on it: let $\chi\in L^\infty(\Gamma,\mu)$ be a nonnegative and bounded Borel function on $\Gamma$ which is positive with a positive minimum on a subset positive $\mu$-measure.
Assume in addition  $\beta_v\in \mathbb{C}$ such that $|\beta_v| < 1$, which either equals $\beta_v = 0$ or satisfies 
\begin{equation}
\label{betavcondition}
    \Re e(Y)\Re e(K^2) - \Im m(Y)\Im m(K^2) < 0,
\end{equation}
with $\displaystyle Y = \frac 1Z$ the liner admittance, and $K$ defined in \eqref{D1andK} and when squared gives
\[K^2=k_0^2\frac{\beta_v^2}{4(1-\beta_v)}.\]

Then for all $f\in L^2(\Omega)$, $\eta\in L^2(\Gamma,\mu)$,  and fixed values of all physical constants from~\eqref{EqPhysConstants} 
there exists 
a unique solution $p\in V(\Omega)$ of the variational formulation \eqref{eq:FV} of \eqref{PH}.

Moreover, the solution  $p\in  V(\Omega)$ continuously depends on the data:
there exists a constant $\hat{C}>0$, depending only on  $d$, $c_d$, $R$ and $L$ from~\eqref{EqOmega}, $\chi$, $\beta_v$ and other physical constants from~\eqref{EqPhysConstants},  
such that
 \begin{equation}\label{estimatebound}
  \|p\|_{V(\Omega),\chi}\le \hat{C}(\|f\|_{L^2(\Omega)}+ \|\eta\|_{L^2(\Gamma,\mu)}).
\end{equation}
\end{theorem}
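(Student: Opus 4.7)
The plan is to prove well-posedness of~\eqref{eq:FV} by the Fredholm alternative, writing $A = a_c + a_k$ on $V(\Omega)\times V(\Omega)$ where $a_c$ induces (via Riesz representation) an isomorphism on $V(\Omega)$ and $a_k$ a compact operator. The sum will then be a Fredholm operator of index zero on $V(\Omega)$, so existence of $p$ and the continuity estimate~\eqref{estimatebound} reduce to proving that the homogeneous problem admits only the trivial solution.

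For the splitting, I would expand
$$\|\mathcal{D}p\|_{L^2(\Omega)}^2 = k_0^2\|p\|_{L^2(\Omega)}^2 + M_0^2 \|\partial_x p\|_{L^2(\Omega)}^2 - 2k_0 M_0\,\Im m(p,\partial_x p)_{L^2(\Omega)},$$
and absorb the principal piece $-M_0^2\|\partial_x p\|^2$ into $\|\nabla p\|^2$ via the lower bound $(1-M_0^2)\|\partial_x p\|^2 + \|\partial_y p\|^2 + \|\partial_z p\|^2 \ge (1-M_0^2)\|\nabla p\|^2$, valid in the subsonic regime $|M_0|<1$. I put this positive piece together with $iY\tfrac{Z_0}{k_0}\langle \Tr_\Gamma(\mathcal{D}_1 p),\Tr_\Gamma(\mathcal{D}_1 q)\rangle_{\chi}$ into $a_c$; the remaining contributions (the $L^2(\Omega)$ products, the mixed $(\partial_x p,q)$ terms coming from $\|\mathcal{D}p\|^2$, the $\Gamma_{out}$ impedance term, and the $K^2$-weighted trace term) form $a_k$. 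Each term of $a_k$ is compact on $V(\Omega)$ by the Rellich--Kondrachov embedding $H^1(\Omega)\hookrightarrow L^2(\Omega)$ and the compactness of $\Tr_{\Gamma_{out}}$ and $\Tr_\Gamma$ into $L^2(\partial\Omega,\mu)$ recalled at the end of Section~\ref{sectionmeasure}. For the coercivity of $a_c$, note that since $\Re e(Y)>0$ the scalar $iY\tfrac{Z_0}{k_0}$ has strictly positive imaginary part $\tfrac{Z_0}{k_0}\Re e(Y)$; picking $\theta\in(0,\pi/2)$ with $\tan\theta>\max(0,\Im m(Y)/\Re e(Y))$ yields
$$\Re e\bigl[e^{-i\theta} a_c(p,p)\bigr] \ge c_\theta\Bigl((1-M_0^2)\|\nabla p\|_{L^2(\Omega)}^2 + \|\Tr_\Gamma(\mathcal{D}_1 p)\|_{\chi}^2\Bigr),$$
and Poincar\'e's inequality on $H^1_{\Gamma_{in}}(\Omega)$ upgrades this to coercivity in the weighted norm $\|\cdot\|_{V(\Omega),\chi}$.

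Uniqueness is the key step. Taking $q=p$ in $A(p,q)=0$ and noting that $\|\nabla p\|^2_{L^2(\Omega)}$ and $\|\mathcal{D}p\|^2_{L^2(\Omega)}$ are real, the imaginary part reads
$$\Im m\,A(p,p) = k\|\Tr_{\Gamma_{out}}p\|_{L^2(\Gamma_{out},\mu)}^2 + \tfrac{Z_0}{k_0}\Re e(Y)\,\|\Tr_\Gamma(\mathcal{D}_1 p)\|_{\chi}^2 - \tfrac{Z_0}{k_0}\bigl[\Re e(Y)\Re e(K^2) - \Im m(Y)\Im m(K^2)\bigr]\|\Tr_\Gamma p\|_{\chi}^2 = 0.$$
Each of the three terms is nonnegative: the first two because $k,Z_0,\Re e(Y)>0$, and the last by condition~\eqref{betavcondition} (or trivially if $\beta_v=0$, since then $K=0$). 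In particular $\Tr_{\Gamma_{out}}p=0$, and combined with the impedance condition $\partial p/\partial n + ik\Tr_{\Gamma_{out}}p=0$ on $\Gamma_{out}$, this yields $\partial p/\partial n = 0$ on $\Gamma_{out}$, i.e.\ vanishing Cauchy data on a relatively open piece of $\partial\Omega$. The convected Helmholtz principal symbol $(1-M_0^2)\xi_1^2+\xi_2^2+\xi_3^2$ being elliptic for $|M_0|<1$, Aronszajn's unique continuation theorem then forces $p\equiv 0$ in $\Omega$, which closes the Fredholm argument and gives~\eqref{estimatebound} by the open mapping theorem.

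The main obstacle I anticipate is tracking the coercivity constant $c_\theta$ and the compact part through the weighted norm $\|\cdot\|_{V(\Omega),\chi}$ precisely enough to pin down the dependence of $\hat C$ on $\chi,\beta_v,L,R,d$ and the physical constants; in particular, if $\chi$ degenerates on large $\mu$-subsets of $\Gamma$, the Fredholm constant could deteriorate, and the positive lower bound of $\chi$ on its support must be invoked carefully. A secondary technical point is that unique continuation has to be applied despite the boundary measure being $d$-upper regular rather than surface measure, but this is painless since unique continuation is an interior statement and the Cauchy data are imposed on the flat disc $\Gamma_{out}$.
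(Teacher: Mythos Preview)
Your Fredholm decomposition and coercivity argument coincide with the paper's: the paper splits $A=\Theta+\xi$ with the same principal part $\Theta(p,q)=(\nabla p,\nabla q)-M_0^2(\partial_xp,\partial_xq)+iY\tfrac{Z_0}{k_0}\langle\Tr_\Gamma(\mathcal D_1 p),\Tr_\Gamma(\mathcal D_1 q)\rangle_\chi$ and proves coercivity by an equivalent rotation trick, bounding $|\Theta(p,p)|$ from below rather than the real part of a rotated form. The genuine divergence is in the uniqueness step. The paper exploits \emph{all three} vanishing terms in $\Im m\,A(p,p)$ to strip $A(p,q)$ down to $(\nabla p,\nabla q)-(\mathcal Dp,\mathcal Dq)=0$, reads off the homogeneous Neumann condition on the whole lateral wall $\Gamma$ together with zero Dirichlet data at both ends, extends $p$ by zero to the infinite cylinder $\mathbb R\times D$, and takes the Fourier transform in $x$ to reduce to the Neumann Laplacian on the disc, whose countable spectrum forces $\hat w=0$ almost everywhere. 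Your route uses only $\Tr_{\Gamma_{out}}p=0$: combined with the natural impedance condition this gives vanishing Cauchy data on the flat disc $\Gamma_{out}$, and unique continuation (Holmgren, since the coefficients are constant, or Aronszajn after zero extension past $x=L$) finishes. Your argument is shorter, ignores the trace information on $\Gamma$ entirely, and would survive replacing the lateral boundary or its condition by something more general; the paper's argument is self-contained (no external UCP theorem) and makes the $x$-translational structure of the cylinder explicit. One small point: you should justify that the impedance condition is recovered as a natural boundary condition from $A(p,q)=0$ before invoking it, and that elliptic regularity near the flat $\Gamma_{out}$ upgrades the a priori weak Cauchy data to the classical vanishing needed for unique continuation---both routine, but worth a sentence.
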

\begin{remark}
We denote by admissible zone $\mathscr{B}_v$:
\begin{equation}
    \label{poissondef}
    \mathscr{B}_v =\{\beta_v\in \bC \mid |\beta_v| < 1, \Re e(Y)\Re e(K^2) - \Im m(Y)\Im m(K^2) < 0\}.
\end{equation}
Since this set only depends on the value of the ratio $\displaystyle r:= \frac{\Im m(Y)}{\Re e(Y)}\in \bR$, this set will be referred to as $\mathscr{B}_{v,r}$ when its dependence on $r$ needs to be mentioned (see Figs~\ref{fig:fish1},~\ref{fig:fish2} and~\ref{fig:two_images} with~Appendix~\ref{AppLimitGr}).
\end{remark} 
The next two subsections are dedicated to the proof of this theorem.
\subsection{Fredholm Decomposition}
\label{sub:fredholm}
We start by performing a Fredholm-type decomposition on~\eqref{eq:FV}.
By ``Fredholm-type decomposition" we understand here a decomposition of the following type:
\[A(p,q) = \Theta(p,q) + \langle K'u,v \rangle,\]
where $\Theta:(V(\Omega))^2\to \mathbb{C}$ is a continuous coercive sesquilinear form and $K':V(\Omega)\to V(\Omega)$ is compact. If $A$ admits such a decomposition, it can be transformed into \[\forall u,v\in V(\Omega), A(u,v) = \langle (cId-K')u,v \rangle\] with $c\ne 0$, up to isomorphism and change of inner product.

In this aim we write
\begin{equation}
\forall p,q\in V(\Omega), \quad A(p,q) = \Theta(p,q) + \xi(p,q),
\end{equation}
where
\begin{equation}
\label{Theta}
    \Theta(p,q) = (\nabla p,\nabla q)_{(L^2(\Omega))^3} - M_0^2(\partial_xp,\partial_xq)_{L^2(\Omega)}+iY \frac{Z_0}{k_0}\langle \Tr_\Gamma(\mathcal{D}_1 p),\Tr_\Gamma(\mathcal{D}_1 q)\rangle_{\chi},
\end{equation}
and
\begin{align}
\label{smallxi}	
\begin{split}
    \xi(p,q) = &-k_0^2(p,q)_{L^2(\Omega)}+iM_0k_0((\partial_xp,q)_{L^2(\Omega)}-(p,\partial_xq)_{L^2(\Omega)})\\
    &+ik(\Tr_{\Gamma_{out}}p,\Tr_{\Gamma_{out}}q)_{L^2(\Gamma_{out},\mu)}-iY \frac{Z_0}{k_0}K^2\langle \Tr_\Gamma p,\Tr_\Gamma q \rangle_{\chi}.
\end{split}
\end{align}
 Forms $\Theta$ and $\xi$ are clearly sesquilinear and continuous on $V(\Omega)$. Therefore, we apply the Riesz representation theorem to obtain:
\begin{equation}
\label{Xi}
\forall q,h\in V(\Omega),\quad \xi(q,h) = (\Xi q,h)_{V(\Omega)}, \end{equation}
where $\Xi:V(\Omega)\to V(\Omega)$ is a continuous linear operator.
\begin{lemma}
\label{le:xi compact}
The linear operator $\Xi:V(\Omega)\to V(\Omega)$ defined by (\ref{Xi}) is compact.
\end{lemma}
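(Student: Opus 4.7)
The approach is to show $\Xi$ is compact by the standard criterion: for every bounded sequence $(q_n)\subset V(\Omega)$, the sequence $(\Xi q_n)$ admits a $V(\Omega)$-convergent subsequence. The starting observation is that the $V(\Omega)$-norm dominates $\|\nabla\cdot\|_{L^2(\Omega)}$, and since elements of $V(\Omega)$ satisfy $\Tr_{\Gamma_{in}}q=0$, a Poincaré inequality yields a continuous embedding $V(\Omega)\hookrightarrow H^1(\Omega)$. Therefore any bounded sequence in $V(\Omega)$ is bounded in $H^1(\Omega)$, and I will invoke Rellich compactness together with the compactness of the trace operators $\Tr_{\Gamma}$ and $\Tr_{\Gamma_{out}}$ (recalled earlier in the paper) to extract a subsequence, still denoted $(q_n)$, such that $q_n\to q$ strongly in $L^2(\Omega)$, $\Tr_{\Gamma}q_n\to \Tr_{\Gamma}q$ in $L^2(\Gamma,\mu)$, $\Tr_{\Gamma_{out}}q_n\to\Tr_{\Gamma_{out}}q$ in $L^2(\Gamma_{out},\mu)$, and $q_n\rightharpoonup q$ weakly in $H^1(\Omega)$.

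By the Riesz identification $\|\Xi(q_n-q)\|_{V(\Omega)}=\sup_{\|h\|_{V(\Omega)}=1}|\xi(q_n-q,h)|$, so it suffices to bound each of the five summands of \eqref{smallxi} with $p=q_n-q$ by a quantity tending to zero uniformly in $h$ on the unit ball of $V(\Omega)$. Four of the terms are immediate: the $L^2(\Omega)$ inner products $(q_n-q,h)_{L^2}$ and $(q_n-q,\partial_x h)_{L^2}$ are controlled by $\|q_n-q\|_{L^2(\Omega)}\,\|h\|_{V(\Omega)}$ through Cauchy--Schwarz and the embedding $V(\Omega)\hookrightarrow H^1(\Omega)$, while the two boundary inner products on $\Gamma_{out}$ and $\Gamma$ are controlled by $\|\Tr_{\Gamma_{out}}(q_n-q)\|_{L^2(\Gamma_{out},\mu)}$, respectively $\|\Tr_{\Gamma}(q_n-q)\|_{L^2(\Gamma,\mu)}$, times the continuous trace of $h$; both prefactors vanish along the subsequence.

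The only delicate summand is $(\partial_x(q_n-q),h)_{L^2(\Omega)}$, since $\partial_x(q_n-q)$ converges only weakly in $L^2(\Omega)$ and the supremum over $h$ is not controlled directly by this weak convergence. Here I will use the generalized Green formula \eqref{EqGFnx} to write
\[
(\partial_x(q_n-q),h)_{L^2(\Omega)}=\bigl\langle (q_n-q)\cdot n_x,\Tr_{\partial\Omega}h\bigr\rangle_{B'(\partial\Omega),B(\partial\Omega)}-(q_n-q,\partial_x h)_{L^2(\Omega)}.
\]
Because $\partial\Omega$ is Lipschitz with $n_x\in L^\infty(\partial\Omega,\mu)$ and $\Tr_{\partial\Omega}(q_n-q)\in L^2(\partial\Omega,\mu)$, the functional $(q_n-q)\cdot n_x$ is represented by $\Tr_{\partial\Omega}(q_n-q)\,n_x\in L^2(\partial\Omega,\mu)$ as in Remark~\ref{remarkregularity}. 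On the cylinder \eqref{EqOmega} one has $n_x=0$ on $\Gamma$, $n_x=-1$ on $\Gamma_{in}$, $n_x=1$ on $\Gamma_{out}$; combined with $\Tr_{\Gamma_{in}}(q_n-q)=0$, the duality pairing collapses to $\int_{\Gamma_{out}}\Tr_{\Gamma_{out}}(q_n-q)\,\overline{\Tr_{\Gamma_{out}}h}\,d\mu$, which is handled exactly as the previous boundary terms.

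Putting these estimates together yields $|\xi(q_n-q,h)|\le \varepsilon_n\|h\|_{V(\Omega)}$ with $\varepsilon_n\to 0$ independent of $h$, hence $\Xi q_n\to \Xi q$ in $V(\Omega)$ and $\Xi$ is compact. The main obstacle is the $\partial_x p$ term, whose treatment rests on the generalized Green formula and the fact that the cylindrical geometry annihilates $n_x$ on the lateral part $\Gamma$ where the boundary measure $\mu$ may be irregular; this is what turns a weakly convergent derivative into a strongly convergent boundary contribution.
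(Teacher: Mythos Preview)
Your proof is correct but follows a genuinely different route from the paper's. The paper argues by the weak-plus-norm trick: starting from $q_n\rightharpoonup q$ in $V(\Omega)$, it notes that $\Xi q_n\rightharpoonup \Xi q$ weakly by continuity, then computes $\|\Xi q_n\|_{V(\Omega)}^2=\xi(q_n,\Xi q_n)$ and shows each summand converges because it pairs a strongly convergent factor (e.g.\ $q_n\to q$ and $\Xi q_n\to\Xi q$ in $L^2(\Omega)$ via the compact embedding, and the traces via compactness of $\Tr_{\partial\Omega}$) with a weakly convergent one ($\partial_x q_n$, $\partial_x\Xi q_n$); norm convergence together with weak convergence then yields strong convergence of $\Xi q_n$. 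In particular, the problematic term $(\partial_x q_n,\Xi q_n)_{L^2}$ is handled simply because $\Xi q_n\to\Xi q$ strongly in $L^2(\Omega)$, with no appeal to the Green formula or to the cylindrical geometry. Your approach instead estimates $\sup_{\|h\|=1}|\xi(q_n-q,h)|$ directly and dispatches $(\partial_x(q_n-q),h)_{L^2}$ by integrating by parts via \eqref{EqGFnx}, relying essentially on $n_x|_\Gamma=0$ and $\Tr_{\Gamma_{in}}(q_n-q)=0$ to collapse the boundary contribution to $\Gamma_{out}$. This is valid here and arguably more hands-on, but it is geometry-specific: the paper's argument would go through unchanged on domains where $n_x$ does not vanish on the part of the boundary carrying the possibly irregular measure, whereas yours would require additional work in that case.
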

\begin{proof} We consider a weakly convergent sequence $q_n\rightharpoonup q$ in $V(\Omega)$. We directly have $\Xi q_n\rightharpoonup \Xi q$ in $V(\Omega)$ by continuity.
Then, by the compactness of the trace operator, $\Tr_{\partial \Omega} q_n\longrightarrow \Tr_{\partial \Omega} q$ in $L^2(\partial\Omega,\mu)$, hence in particular in $L^2(\Gamma_{out},\mu)$ and $L^2(\Gamma,\mu)$ ($\Gamma$ and $\Gamma_{out}$ are compact parts of $\partial \Omega$). Finally, the canonical injection $\iota:V(\Omega)\to L^2(\Omega)$ is compact because $V(\Omega)$ is a closed subspace of $H^1(\Omega)$.

We deduce by composition of continuous operators with compact/continuous operators:
\[\Xi q_n\longrightarrow \Xi q \text{ in } L^2(\Omega)\hspace{2pt};\hspace{2pt}\Tr_{\partial \Omega}(\Xi q_n)\longrightarrow \Tr_{\partial \Omega}(\Xi q) \text{ in } L^2(\partial\Omega,\mu)\hspace{2pt};\hspace{2pt}\partial_x\Xi q_n\rightharpoonup \partial_x\Xi q\text{ in } L^2(\Omega).\]
Then,
\begin{align*}
\begin{split}
  \|\Xi q_n\|^2_{V(\Omega)} &= -k_0^2(q_n,\Xi q_n)_{L^2(\Omega)}+iM_0k_0((\partial_xq_n,\Xi q_n)_{L^2(\Omega)}-(q_n,\partial_x\Xi q_n)_{L^2(\Omega)}) \\
  &+ \ ik(\Tr_{\Gamma_{out}}q_n,\Tr_{\Gamma_{out}}(\Xi q_n))_{L^2(\Gamma_{out},\mu)} -iY \frac{Z_0}{k_0}K^2\langle \Tr_\Gamma q_n,\Tr_\Gamma(\Xi q_n) \rangle_{\chi};
\end{split}
\end{align*}
and thus:
\begin{align*}
\begin{split}
 \|\Xi q_n\|^2_{V(\Omega)} \longrightarrow &-k_0^2(q,\Xi q)_{L^2(\Omega)}+iM_0k_0((\partial_xq,\Xi q)_{L^2(\Omega)}-(q,\partial_x\Xi q)_{L^2(\Omega)}) \\
  &+ \ ik(\Tr_{\Gamma_{out}}q),\Tr_{\Gamma_{out}}(\Xi q))_{L^2(\Gamma_{out},\mu)} -iY \frac{Z_0}{k_0}K^2\langle \Tr_\Gamma q,\Tr_\Gamma(\Xi q) \rangle_{\chi}.
\end{split}
\end{align*}
Therefore, $\|\Xi q_n\|^2_{V(\Omega)}\longrightarrow \|\Xi q\|^2_{V(\Omega)}$. The weak convergence in $V(\Omega)$ coupled with the convergence in norm allows us to conclude that $q_n \longrightarrow q$ in $V(\Omega)$, thus proving that $\Xi$ is compact.\end{proof} 
\begin{lemma}
\label{le:theta coercive}
The sesquilinear form $\Theta : V(\Omega)^2 \to \mathbb{C}$ defined by (\ref{Theta}) is coercive.
\end{lemma}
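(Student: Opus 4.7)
The plan is to prove coercivity via the standard trick of extracting a suitable real-linear combination of $\mathrm{Re}\,\Theta(p,p)$ and $\mathrm{Im}\,\Theta(p,p)$. First I would evaluate $\Theta$ on the diagonal and note that, because the only nonreal coefficient in $\Theta$ multiplies a nonnegative quantity, one has the explicit splitting
\begin{align*}
\mathrm{Re}\,\Theta(p,p) &= \|\nabla p\|_{(L^2(\Omega))^3}^2 - M_0^2\|\partial_x p\|_{L^2(\Omega)}^2 - \mathrm{Im}(Y)\,\tfrac{Z_0}{k_0}\|\Tr_\Gamma(\mathcal{D}_1 p)\|_{\chi}^2,\\
\mathrm{Im}\,\Theta(p,p) &= \mathrm{Re}(Y)\,\tfrac{Z_0}{k_0}\|\Tr_\Gamma(\mathcal{D}_1 p)\|_{\chi}^2.
\end{align*}

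Next I would use that, since $|M_0|<1$ (subsonic regime), the decomposition $\|\nabla p\|^2 = \|\partial_x p\|^2+\|\partial_y p\|^2+\|\partial_z p\|^2$ yields
\[
\|\nabla p\|_{(L^2(\Omega))^3}^2 - M_0^2\|\partial_x p\|_{L^2(\Omega)}^2 \;\geq\; (1-M_0^2)\,\|\nabla p\|_{(L^2(\Omega))^3}^2,
\]
so the volume part of $\mathrm{Re}\,\Theta(p,p)$ already controls $\|\nabla p\|^2$ up to the positive constant $1-M_0^2$. The only obstruction to coercivity of $\mathrm{Re}\,\Theta$ alone is the sign of $\mathrm{Im}(Y)$ in front of the boundary term; this is handled by absorbing it via $\mathrm{Im}\,\Theta(p,p)$, whose boundary coefficient $\mathrm{Re}(Y)\,Z_0/k_0$ is strictly positive by hypothesis.

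I would then apply the elementary bound
\[
|\Theta(p,p)| \;\geq\; \frac{\mathrm{Re}\,\Theta(p,p)+\lambda\,\mathrm{Im}\,\Theta(p,p)}{\sqrt{1+\lambda^2}}\qquad\text{for any }\lambda\in\mathbb{R},
\]
and choose $\lambda$ large enough that $\lambda\mathrm{Re}(Y)-\mathrm{Im}(Y)>0$, e.g.\ $\lambda := 1+|\mathrm{Im}(Y)|/\mathrm{Re}(Y)$. With this choice the combination equals
\[
(1-M_0^2)\|\nabla p\|_{(L^2(\Omega))^3}^2 \;+\; \bigl(\lambda\mathrm{Re}(Y)-\mathrm{Im}(Y)\bigr)\tfrac{Z_0}{k_0}\|\Tr_\Gamma(\mathcal{D}_1 p)\|_{\chi}^2 \;\geq\; c\,\|p\|_{V(\Omega)}^2
\]
for the positive constant $c=\min\{1-M_0^2,\ (\lambda\mathrm{Re}(Y)-\mathrm{Im}(Y))Z_0/k_0\}$, which is exactly the definition \eqref{Vnorm} of the $V(\Omega)$-norm. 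Dividing by $\sqrt{1+\lambda^2}$ yields the desired coercivity constant.

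The potential pitfall in the argument is the sign ambiguity of $\mathrm{Im}(Y)$: one cannot conclude coercivity from $\mathrm{Re}\,\Theta(p,p)$ alone, nor from $|\mathrm{Im}\,\Theta(p,p)|$ alone (the latter misses the volume term entirely). The mild technicality is thus that coercivity requires mixing the two, which is exactly what the parameter $\lambda$ above provides. Note in particular that neither the condition~\eqref{betavcondition} on $\beta_v$ nor the value of $K$ appears in this lemma, since those quantities only enter through the form $\xi$ and will be used later when inverting $cI-\Xi$ via the Fredholm alternative.
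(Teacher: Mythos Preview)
Your proof is correct and follows the same underlying idea as the paper---both recognize that $\Theta(p,p)$ is the sum of a nonnegative real quantity and a complex multiple (with argument bounded away from $0$ and $\pi$, since $\Re e(Y)>0$) of another nonnegative quantity, and both extract coercivity by a rotation in $\mathbb{C}$. The execution differs: the paper writes $\Theta(p,p)=\lambda-e^{i\theta}\beta$ with $\lambda,\beta\ge0$ and $\theta=\Arg(Y)-\pi/2$, then invokes the trigonometric identity $|\lambda-e^{i\theta}\beta|^2=(\lambda-\beta)^2+4\lambda\beta\sin^2(\theta/2)\ge\sin^2(\theta/2)(\lambda+\beta)^2$, obtaining the clean constant $|\sin(\theta/2)|\min(1-M_0^2,\,|Y|Z_0/k_0)$. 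You instead split into real and imaginary parts and use the Cauchy--Schwarz bound $|z|\ge(\Re e\,z+\lambda\,\Im m\,z)/\sqrt{1+\lambda^2}$ with $\lambda$ chosen so that $\lambda\Re e(Y)-\Im m(Y)>0$. Your route is a bit more elementary (no trig identity), while the paper's packaging yields a slightly more symmetric constant; neither approach needs condition~\eqref{betavcondition}, as you correctly note. One cosmetic slip: where you write ``the combination equals $(1-M_0^2)\|\nabla p\|^2+\cdots$'', it actually equals $\|\nabla p\|^2-M_0^2\|\partial_x p\|^2+\cdots$ and is only \emph{bounded below} by the displayed expression---the conclusion is unaffected.
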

\begin{proof}
Let $p\in V(\Omega)$. We have:
\begin{equation}\Theta(p,p) = \|\nabla p\|^2_{(L^2(\Omega))^3} - M_0^2\|\partial_xp\|^2_{L^2(\Omega)}+iY \frac{Z_0}{k_0} \|\Tr_\Gamma(\mathcal{D}_1p)\|^2_{\chi}.\end{equation}
We denote \(iY =  -|Y| e^{i\theta}\), where \(\displaystyle\theta \equiv \Arg(Y) - \frac{\pi}{2} [2\pi]\). We then define:
\begin{equation}\lambda := \|\nabla p\|^2_{(L^2(\Omega))^3} - M_0^2\|\partial_xp\|^2_{L^2(\Omega)}\hspace{2pt};\hspace{2pt} \beta :=|Y| \frac{Z_0}{k_0} \|\Tr_\Gamma(\mathcal{D}_1p)\|^2_{\chi}.\end{equation}
By writing $\Theta$ in the following form, inspired by~\cite{LUNEVILLE-2014}:
\begin{equation}
|\Theta(p, p)|^2 = | \lambda - e^{i\theta}\beta|^2 = (\lambda - \beta)^2 + 4\lambda\beta \sin^2\left(\frac{\theta}{2}\right) \geq \sin^2\left(\frac{\theta}{2}\right) (\lambda + \beta)^2,\end{equation}
one can deduce that,
\begin{equation}|\Theta(p, p)| \geq \Big|\sin\left(\frac{\theta}{2}\right)\Big|\min\Big(1-M_0^2,|Y| \frac{Z_0}{k_0}\Big)\Big|\|p\|^2_{V(\Omega)}.\end{equation}
Moreover, $\displaystyle\sin\left(\frac{\theta}{2}\right)\neq 0$ (otherwise $\displaystyle \Arg(Y) \equiv \frac{\pi}{2}$ $[\pi]$ and $\Re e(Y) = 0$). Thus $\Theta$ is coercive.
\end{proof} 
\subsection{Injectivity}
We prove the following "injectivity" statement in order to apply the first Fredholm Theorem:
\begin{lemma}
\label{le:injectivity} If $\beta_v \in \mathscr{B}_v$ or $\beta_v = 0$, then the sesquilinear form $A$ defined in (\ref{eq:FV}) verifies:
\[\forall u\in V(\Omega), \, (\forall v\in V(\Omega), A(u,v) = 0)\Longrightarrow (u = 0).\]
\end{lemma}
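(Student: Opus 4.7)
The plan is to test the identity $A(u,v)=0$ against $v=u$, extract boundary information from the imaginary part of $A(u,u)=0$, and then conclude by unique continuation for the convected Helmholtz equation.

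First, I would compute $A(u,u)$. The only delicate term is the convected one: a direct integration gives the real-valued identity
\[
(\mathcal{D}u,\mathcal{D}u)_{L^2(\Omega)}
= k_0^2\|u\|_{L^2(\Omega)}^2 + M_0^2\|\partial_x u\|_{L^2(\Omega)}^2
- 2k_0M_0\,\Im\!\int_\Omega u\,\partial_x\bar u\,dx,
\]
since the cross terms combine through $u\partial_x\bar u-\bar u\partial_x u=2i\,\Im(u\partial_x\bar u)$. Hence all the imaginary content of $A(u,u)$ sits in the boundary contributions. Using $\Im(iY)=\Re(Y)$ and $\Im(iYK^2)=\Re(YK^2)=\Re(Y)\Re(K^2)-\Im(Y)\Im(K^2)$, one obtains
\[
\Im A(u,u)
= k\,\|\Tr_{\Gamma_{out}}u\|_{L^2(\Gamma_{out},\mu)}^2
+ \Re(Y)\tfrac{Z_0}{k_0}\,\|\Tr_\Gamma(\mathcal{D}_1 u)\|_\chi^2
- \Re(YK^2)\tfrac{Z_0}{k_0}\,\|\Tr_\Gamma u\|_\chi^2.
\]
Since $k, Z_0, k_0>0$ and $\Re(Y)>0$ by the physical assumptions, and since $-\Re(YK^2)\geq 0$ either by assumption~\eqref{betavcondition} or trivially when $\beta_v=0$ (as then $K=0$), the three summands are non-negative. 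Vanishing of the sum forces each term to vanish; in particular $\Tr_{\Gamma_{out}}u=0$.

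Next, combining $\Tr_{\Gamma_{out}}u=0$ with the outflow boundary condition $\partial u/\partial n+ik\,\Tr_{\Gamma_{out}}u=0$ (encoded through the boundary integrals of the variational formulation) yields $\partial u/\partial n=0$ on $\Gamma_{out}$. Thus $u$ has vanishing Cauchy data on $\Gamma_{out}$. On the other hand, since $C_c^\infty(\Omega)\subset V(\Omega)$, testing $A(u,\varphi)=0$ against any $\varphi\in C_c^\infty(\Omega)$ and integrating by parts shows that $u$ solves $\Delta u+\mathcal{D}^2 u=0$ in $\mathcal{D}'(\Omega)$.

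Finally, I would conclude by unique continuation. The operator $\Delta+\mathcal{D}^2$ has constant (hence analytic) coefficients and is elliptic because $|M_0|<1$, so interior elliptic regularity makes $u$ real-analytic in $\Omega$. Extending $u$ by zero across the flat, non-characteristic disc $\Gamma_{out}=\{L\}\times\overline{\mathcal{B}(0_{\bR^2},R)}$ produces an $H^2_{\mathrm{loc}}$ function solving the same PDE in a neighborhood of $\Gamma_{out}$ (the jumps of both $u$ and $\partial_x u$ across $\Gamma_{out}$ vanish), which is therefore analytic there and identically zero on one side; by analyticity it must vanish on both sides. Equivalently, Holmgren's uniqueness theorem applies directly. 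Connectedness of $\Omega$ then gives $u\equiv 0$. The step I expect to require the most care is this last one: making sure that the vanishing Cauchy data on $\Gamma_{out}$ is strong enough to justify the zero-extension (or direct use of Holmgren), but the flatness of $\Gamma_{out}$ together with the ellipticity-induced analyticity of $u$ up to this piece makes the verification routine.
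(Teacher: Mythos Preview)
Your argument is correct, and through the computation of $\Im A(u,u)$ it coincides with the paper's. The divergence is in the last step. You retain only $\Tr_{\Gamma_{out}}u=0$, recover $\partial u/\partial n=0$ on $\Gamma_{out}$ from the variational identity, and conclude via Holmgren/analytic unique continuation for the constant-coefficient elliptic operator $\Delta+\mathcal D^2$ across the flat non-characteristic face $\Gamma_{out}$. The paper instead uses all of the trace information (including $\Tr_\Gamma(\mathcal D_1 u)=0$ and, when $\beta_v\in\mathscr B_v$, $\Tr_\Gamma u=0$ on $\operatorname{supp}\chi$) to reduce $A(u,q)=0$ to $(\nabla u,\nabla q)-(\mathcal D u,\mathcal D q)=0$, reads off the mixed problem with Neumann data on $\Gamma$ and Dirichlet data on $\Gamma_{in}\cup\Gamma_{out}$, extends $u$ by zero to the infinite cylinder $\bR\times D$, and applies the transverse Fourier transform in $x$ together with the countability of the Neumann spectrum of the disc to force $\hat u(\xi,\cdot)=0$ for a.e.\ $\xi$. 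Your route is shorter, uses only one boundary face, and would transfer verbatim to non-cylindrical geometries with an analytic boundary portion; the paper's Fourier-spectral route leans on the product structure $\Omega=(0,L)\times D$ and also requires the zero extension to solve the PDE across $\Gamma_{in}$, i.e.\ implicitly $\partial_n u=0$ there, which is a step your argument never needs.
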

\begin{proof} Let $p\in V(\Omega)$ such that $\forall q\in V(\Omega), A(p,q) = 0$. In particular, $A(p,p) = 0$. But,
\begin{align*} A(p,p) &= \|\nabla p\|_{L^2(\Omega)}^2 - \|\mathcal{D}p\|_{L^2(\Omega)}^2 + ik\|\Tr_{\Gamma_{out}}p\|_{L^2(\Gamma_{out},\mu)}^2 \\
&+ iY\frac{Z_0}{k_0}\|\Tr_\Gamma(\mathcal{D}_1p)\|_{\chi}^2 - iY\frac{Z_0}{k_0}K^2\|\Tr_\Gamma p\|_{\chi}^2.
\end{align*}
Hence,

\begin{align}
\label{Michelle}
\begin{split}
    \Im m(A(p,p)) &= k\|\Tr_{\Gamma_{out}}p\|_{L^2(\Gamma_{out},\mu)}^2 + \Re e(Y)\frac{Z_0}{k_0}\|\Tr_\Gamma(\mathcal{D}_1p)\|_{\chi}^2 \\
    & - \frac{Z_0}{k_0}\left(\Re e(Y)\Re e(K^2) - \Im m(Y)\Im m(K^2)\right)\|\Tr_\Gamma p\|_{\chi}^2.
\end{split}
\end{align}
By transforming the expression of $K^2$:
\[K^2=k_0^2\frac{\beta_v^2}{4(1-\beta_v)} = k_0^2\frac{\beta_v^2(1-\overline{\beta_v})}{4|1-\beta_v|^2} = k_0^2\frac{\beta_v^2-\beta_v|\beta_v|^2}{4|1-\beta_v|^2}.\]
Let $\beta_v = \beta_R + i\beta_I$. Then,
\[K^2 = \frac{k_0^2}{4|1-\beta_v|^2}\left(\beta_R^2-\beta_I^2-\beta_R|\beta_v|^2 + i\beta_I(2\beta_R-|\beta_v|^2)\right).\]
Thus,
\[
    \Re e(K^2) = \frac{k_0^2}{4|1-\beta_v|^2}(\beta_R^2-\beta_I^2-\beta_R|\beta_v|^2)\hspace{8pt};\hspace{8pt} \Im m(K^2) = \frac{k_0^2}{4|1-\beta_v|^2}\beta_I(2\beta_R-|\beta_v|^2).
\]
We will subsequently show that if all terms of $\Im m(A(p,p))$ are non-negative, then all terms that appear in $\Im m(A(p,p))$ have to be null.

Recalling that $\Re e(Y) > 0$  since $\displaystyle Y = \frac 1Z$ and $\Re e(Z) > 0$,  we assume that $\beta_v$ verifies:
$$\quad- \frac{Z_0}{k_0}(\Re e(Y)\Re e(K^2) - \Im m(Y)\Im m(K^2))\|\Tr_\Gamma p\|_{\chi}^2 \geq 0.$$
To conclude that $\|\Tr_\Gamma p\|_{\chi} = 0$, we suppose that condition \eqref{betavcondition} is verified, i.e.: $$\Re e(Y)\Re e(K^2) - \Im m(Y)\Im m(K^2) < 0.$$
Solving this equation numerically yields these graphs of the admissible values of $\beta_v$ in Figure~\ref{fig:fish1} and Figure~\ref{fig:fish2}, for different values of $\displaystyle \frac{\Im m(Y)}{\Re e(Y)}$.
\begin{figure}[!htb]
    \centering
        \includegraphics[width=0.49\textwidth]{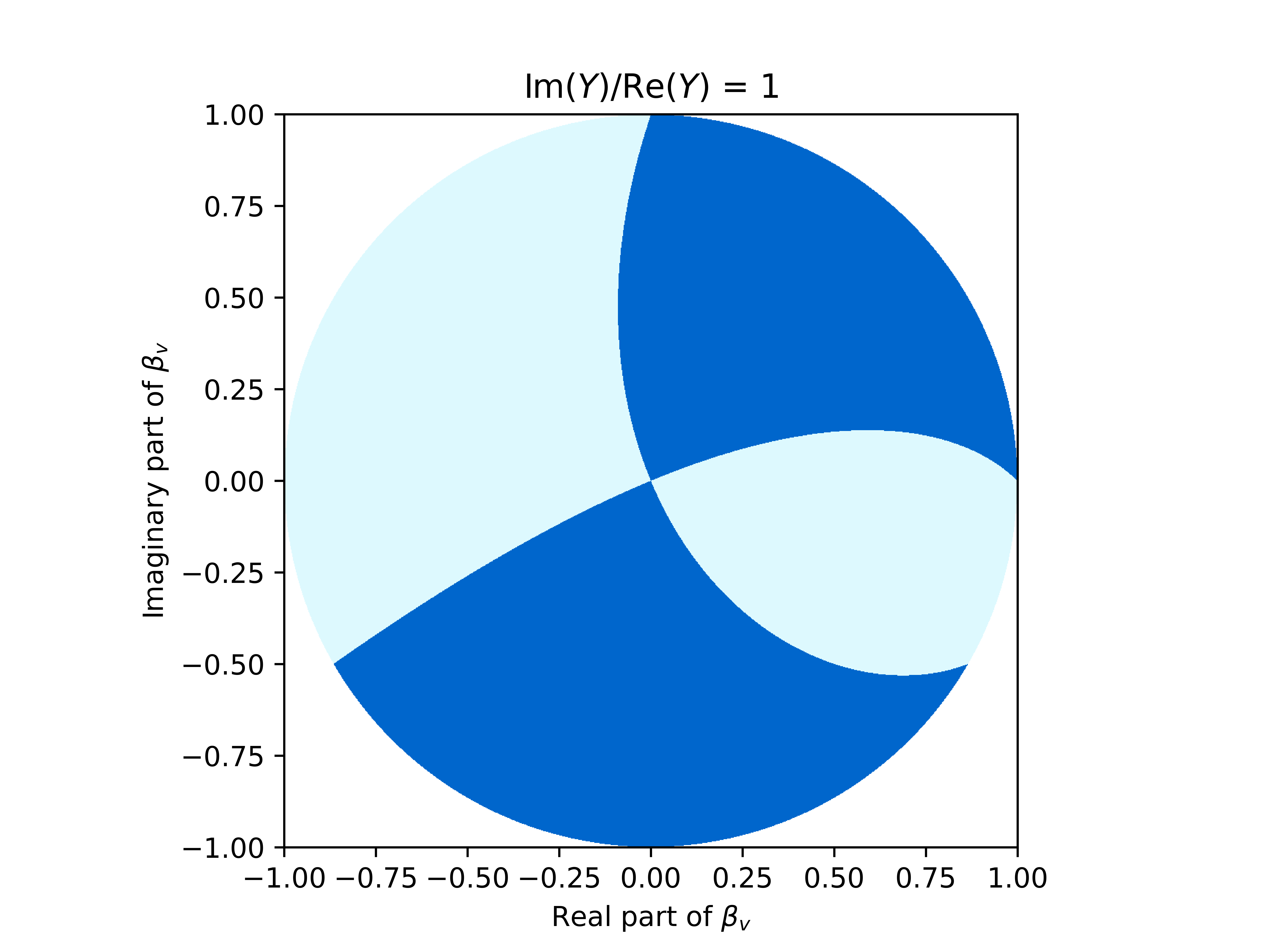}
        \includegraphics[width=0.49\textwidth]{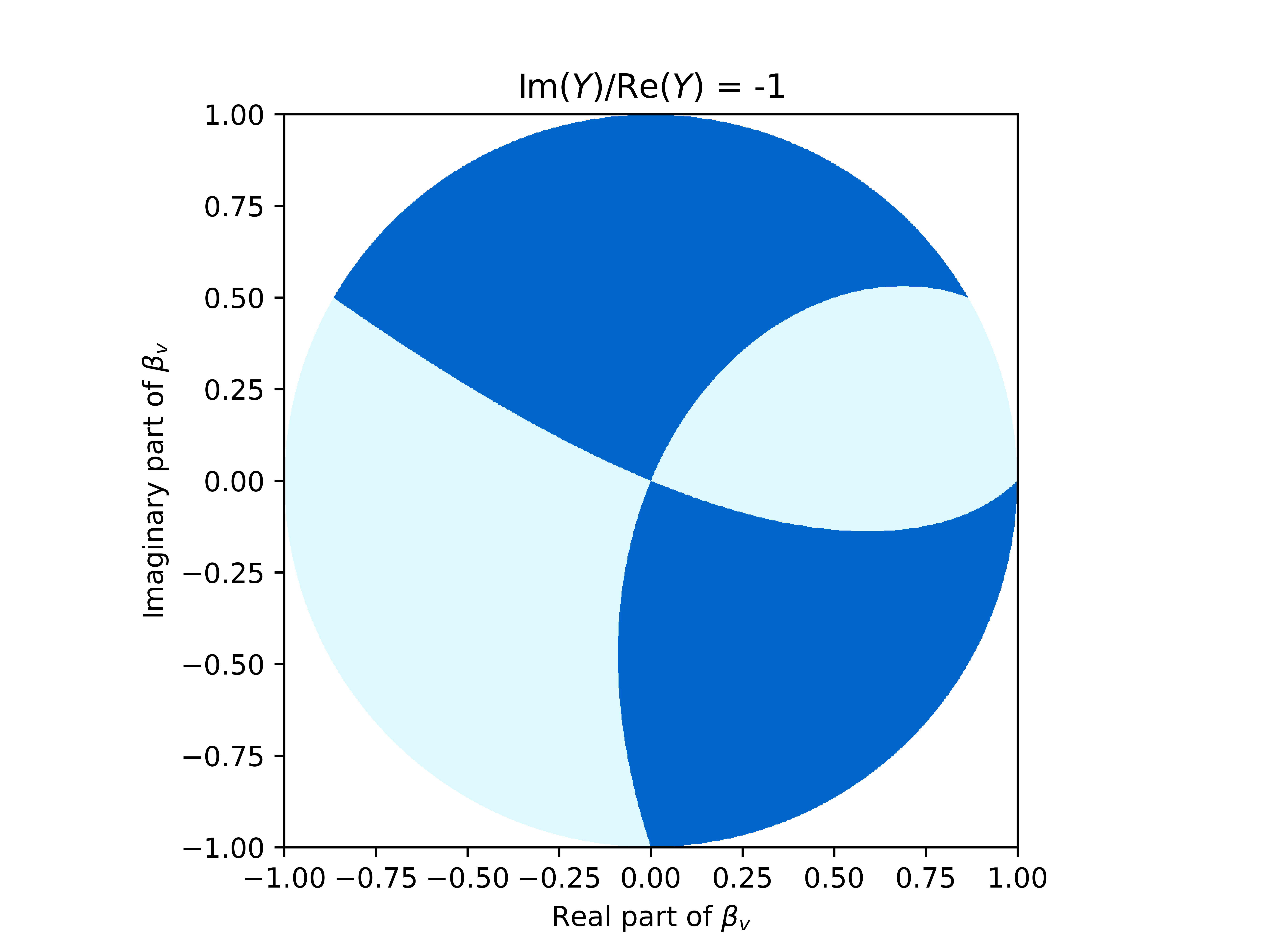}
    \caption{Admissible zones (dark blue) for $\beta_v$ for different values of $\displaystyle r= \frac{\Im m(Y)}{\Re e(Y)}$: on the left for $r=1$ and  $r=-1$ on the right.}
    \label{fig:fish1}
\end{figure}
\begin{figure}[!htb]
    \centering
        \includegraphics[width=0.45\textwidth]{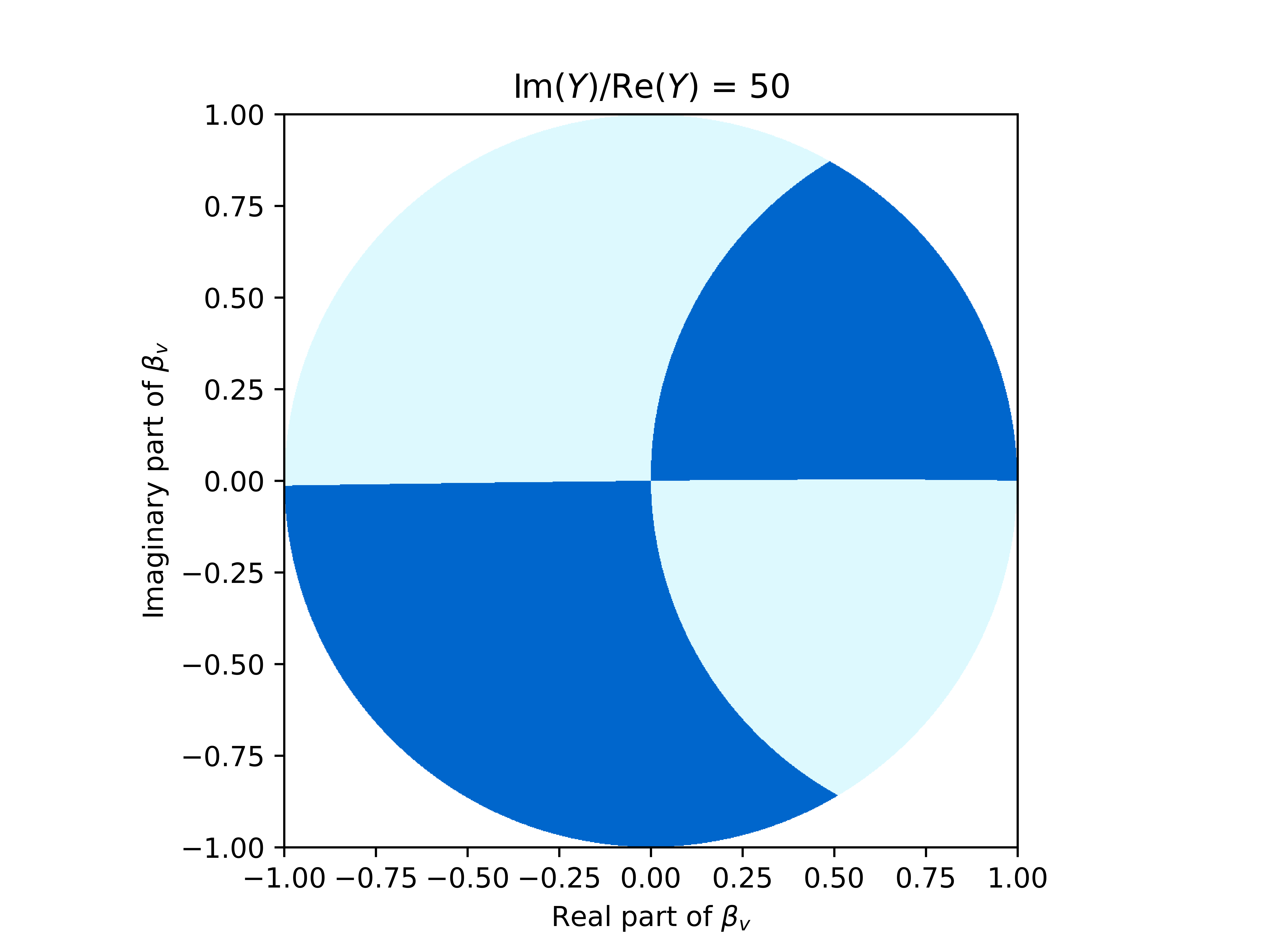}
        \includegraphics[width=0.45\textwidth]{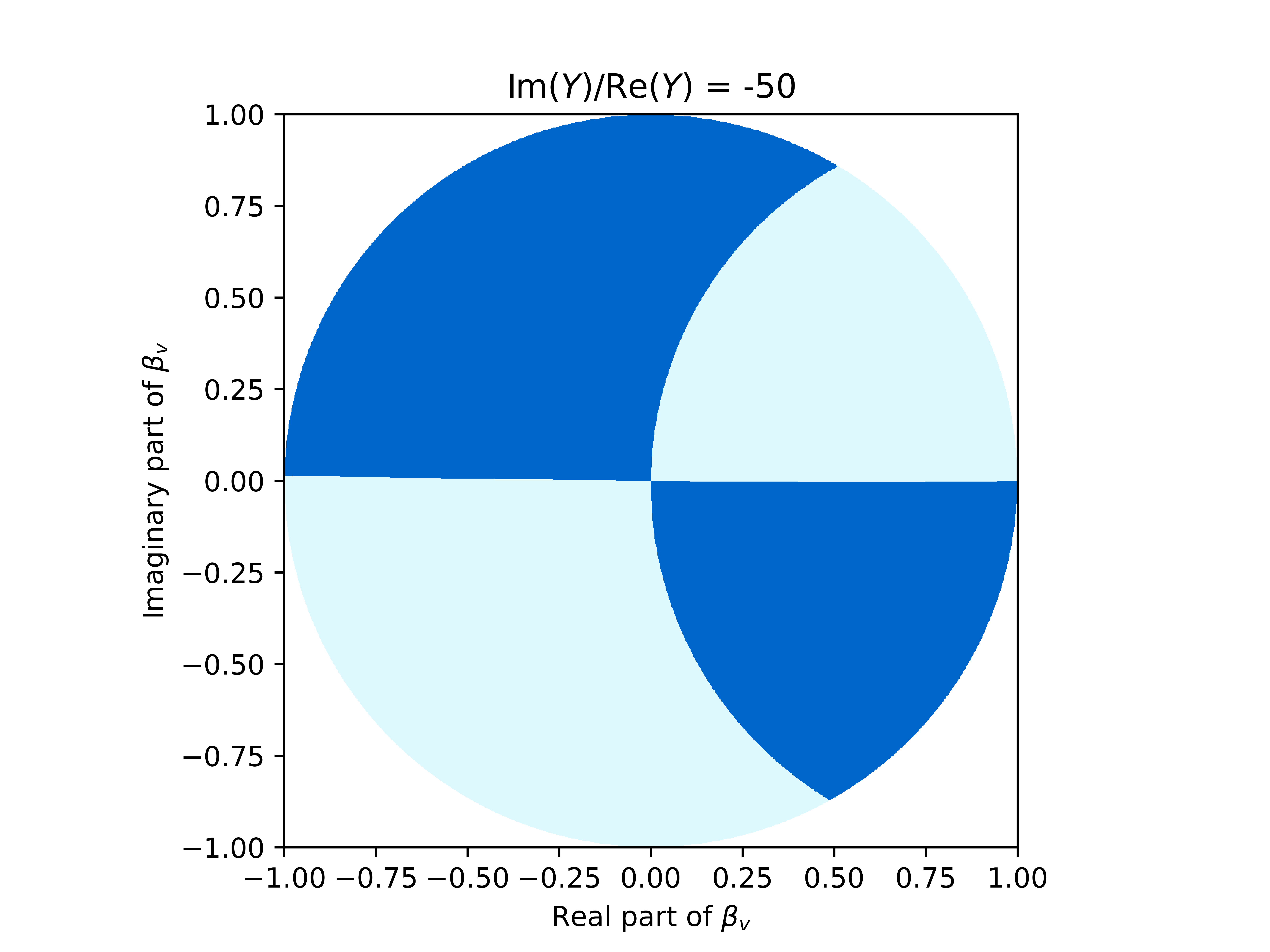}
    \caption{Admissible zones (dark blue) for $\beta_v$ for greater values of $r=\displaystyle \frac{\Im m(Y)}{\Re e(Y)}$ to compare to Fig.~\ref{fig:fish1}: on the left for $r=50$ and $r=-50$ on the right.}
    \label{fig:fish2}
\end{figure}
The admissible zone has a wing-like structure. We prove in~Appendix~\ref{AppLimitGr}  that when the ratio $\displaystyle \frac{\Im m(Y)}{\Re e(Y)}$ goes to $\pm \infty$, the graph of the admissible zone $\mathscr{B}_v$, converges to a specific geometry, that does not depend on the parameters, as shown in Figure~\ref{fig:two_images}.
\begin{figure}[!htb]
  \centering
    \includegraphics[width=0.45\textwidth]{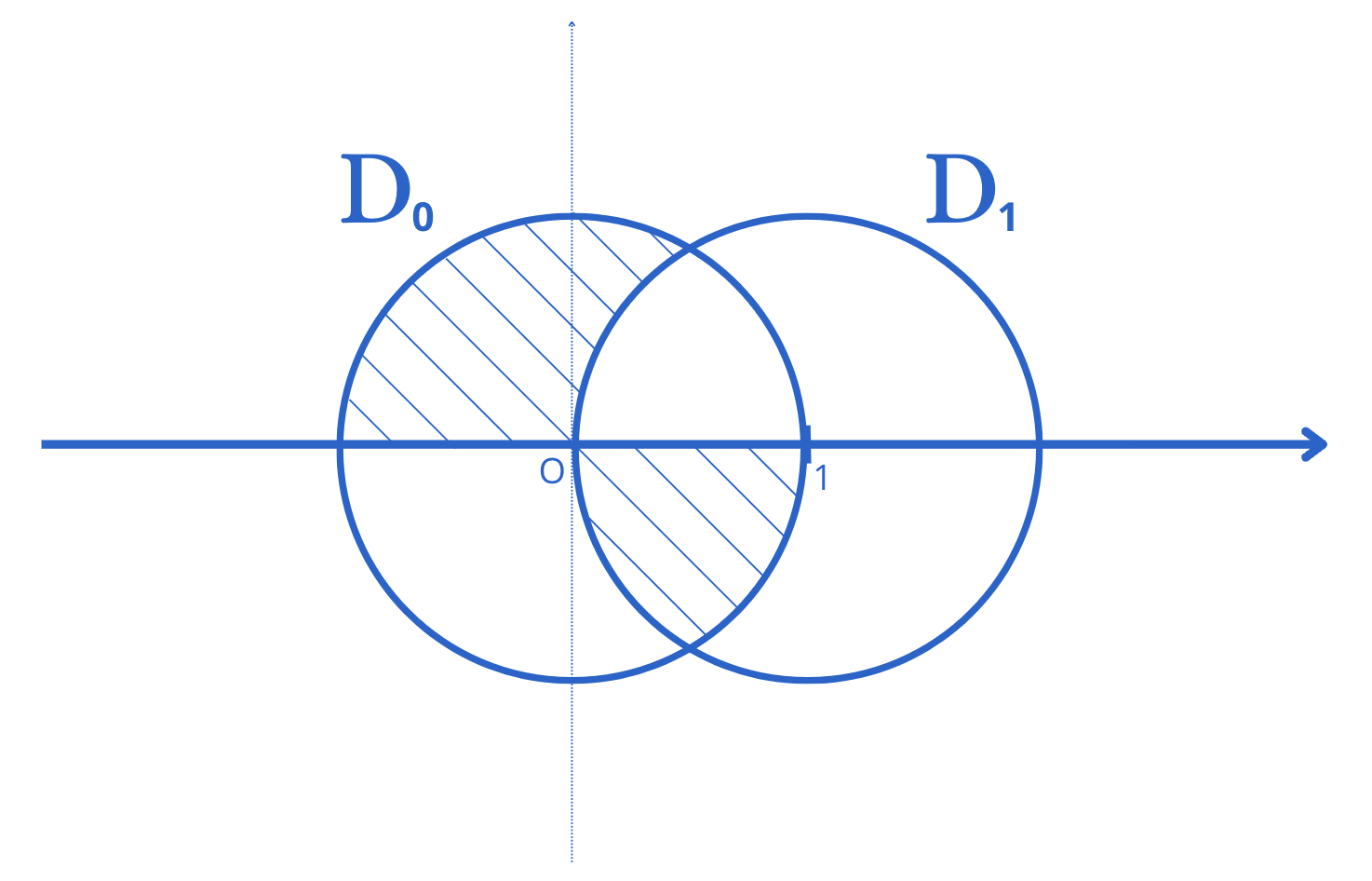} 
    \includegraphics[width=0.45\textwidth]{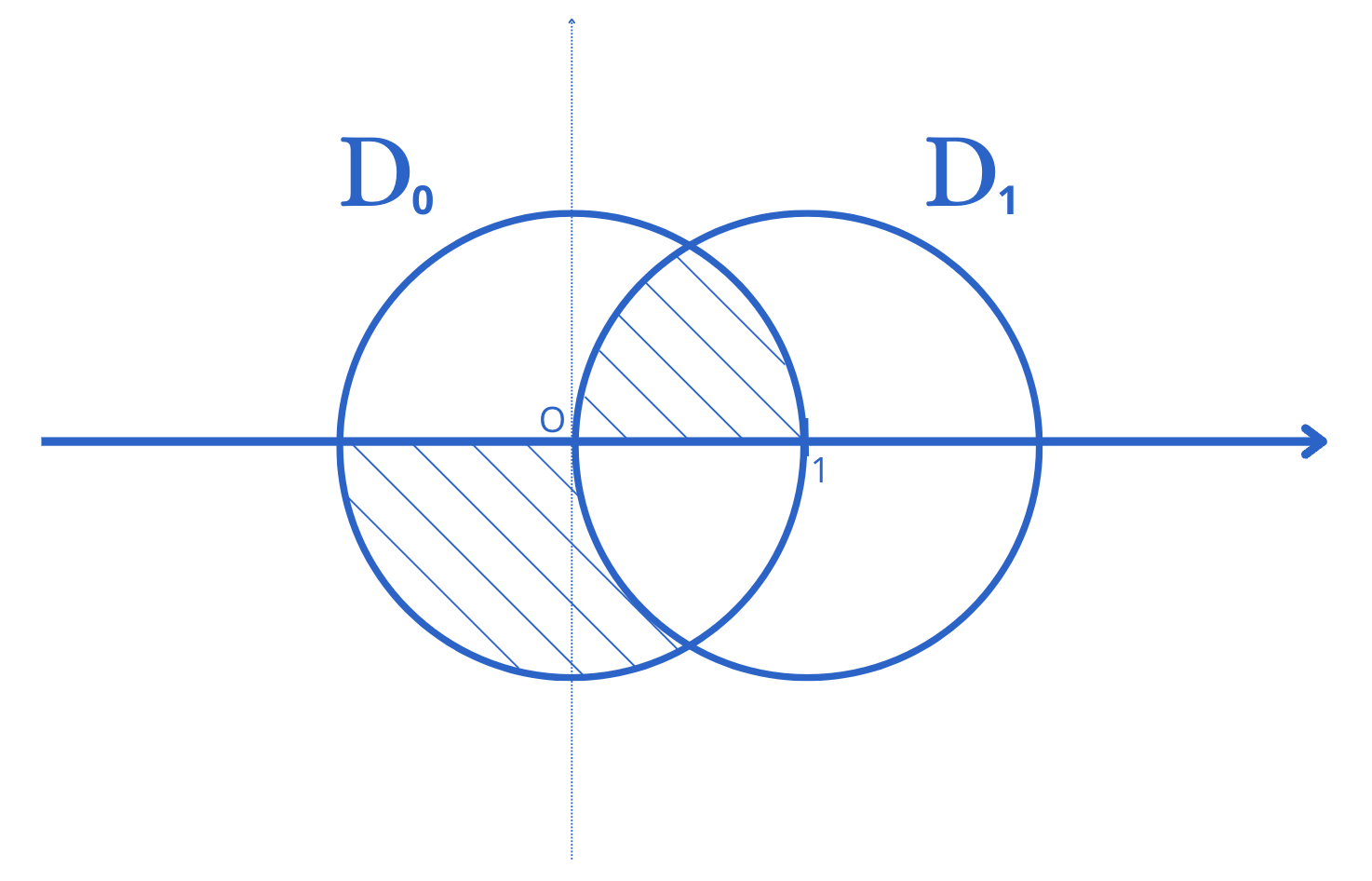} 
  \caption{Limit figures of $\mathscr{B}_v$ when ratio $r$ diverges, on the left, to $-\infty$, on the right, to $+\infty$. $D_0$ is the disc centered at $0 = 0+0i$ of radius $1$, and $D_1$ the disc centered at $1=1+0i$ of radius $1$.}
  \label{fig:two_images}
\end{figure}
In the following figure,

The graphs presented in Figure~\ref{fig:two_images} provide admissible values for \( \beta_v \), when \( |\Im m(Y)| \gg |\Re e(Y)| \), regardless of the specific values of the problem's parameters. 
\label{sec:proof well-posedness}

If $\beta_v \in \mathscr{B}_v$, then from \eqref{Michelle} we deduce $\|\Tr_\Gamma p\|_{\chi} = 0$ and $ \|\Tr_\Gamma(\mathcal{D}_1 p)\|_{\chi}= 0$. 
This implies that:
\begin{equation}
\label{tracenull}
\Tr_{\Gamma_{out}}p = 0; \quad \Tr_\Gamma(\mathcal{D}_1p)_{|\supp \chi} = 0; \quad \Tr_\Gamma p_{|\supp \chi} = 0.
\end{equation}
Thus $\left.\Tr_\Gamma(\partial_xp)\right|_{\supp \chi} = 0$.

If $\beta_v = 0$, then from \eqref{Michelle} we obtain $\Tr_{\Gamma_{out}}p = 0$ and $\Tr_\Gamma(\mathcal{D}_1p)_{|\supp \chi} = 0$. Regardless of whether $\beta_v \in \mathscr{B}_v$ or $\beta_v = 0$, $A(p,q)$ becomes: 
$$ A(p,q) = (\nabla p,\nabla q)_{(L^2(\Omega))^3} - (\mathcal{D}p,\mathcal{D}q)_{L^2(\Omega)} = 0. $$
Thus $p$ is a solution of the differential equation:
\[
    \begin{cases}
    \displaystyle \Delta  p  + \mathcal{D}^2 p = 0 &\text{in }\Omega,
    \\
    \displaystyle \frac{\partial p}{\partial n} = 0 &\text{on }\Gamma,
    \\
    \displaystyle \Tr_{\Gamma_{in}}p = 0 &\text{on }\Gamma_{in},
    \\
    \displaystyle \Tr_{\Gamma_{out}}p = 0 &\text{on }\Gamma_{out}.
    \end{cases}
\]
We define the extension:
\[
w = \begin{cases}
    p &\text{on }\Omega,\\
    0 &\text{on } \Omega_\infty \setminus \Omega,
\end{cases}
\]
for $w\in \{h\in H^1(\Omega_\infty), \Delta h\in L^2(\Omega_\infty)\}$ where $\Omega_\infty = \{(x,y,z)\in \bR^3, y^2+z^2 < R^2\}$ is the infinite cylinder extending $\Omega$, and its boundary $\partial \Omega = \Gamma_\infty$. Then, we write $\Omega_\infty = \bR\times D$ where $D = \mathcal{B}(0_{\bR^2},R)$ is the open disk in $\bR^2$ centered at $(0,0)$ with radius $R$.

Therefore, $w$ satisfies the differential system:
\[
    \begin{cases}
    \displaystyle \Delta  w  + \mathcal{D}^2 w = 0 &\text{on }\Omega_\infty, 
    \\
    \displaystyle \frac{\partial w}{\partial n} = 0 &\text{on }\Gamma_\infty.
    \end{cases}
\]
Finally, the transverse Fourier transform of $w$ is defined almost everywhere as:
\[\forall (\xi,y,z)\in \Omega_\infty, \hat{w}(\xi,y,z) = \int_\bR w(x,y,z)e^{-i\xi x}dx.\]
Noticing that the transverse Fourier transform of the operator $\partial_x$ is $-i\xi$, and that of the operator $\mathcal{D}$ is $\hat{\mathcal{D}} = k_0 - M_0\xi$, $\hat{w}$ satisfies the following for all $\xi \in \mathbb{R}$:
\[
    \begin{cases}
    \displaystyle -\frac{\partial^2 \hat{w}}{\partial y^2}-\frac{\partial^2 \hat{w}}{\partial z^2} = \left(-\xi^2 + (k_0-M_0\xi)^2\right)\hat{w} &\text{on }\{\xi\}\times D, 
    \\
    \displaystyle \frac{\partial \hat{w}}{\partial n} = 0 &\text{on }\{\xi\}\times \partial D. 
    \end{cases}
\]
However, the eigenvalue problem of the Laplacian on $D$ (bounded domain) with $\lambda \geq 0$ and Neumann boundary conditions,
\[    
    \begin{cases}
        \displaystyle -\Delta h = \lambda^2 h &\text{on }D,
        \\
        \displaystyle \frac{\partial h}{\partial n} = 0&\text{on }\partial D,
    \end{cases}
\]
has for unique solution $h=0$ except for a countable number of values $\lambda^2 \geq 0$ (which are related to the eigenvalues of the Neumann Laplacian). We thus gather all $\xi\in \bR$ related to those values into the countable set $\mathscr A\subset \bR$. Therefore, \[\{(\xi,y,z) \in \Omega_\infty, \hat{w}(\xi,y,z) \neq 0\} \subseteq \left(\mathscr A\times D\right) \cup \left\{(\xi,y,z)\in \mathscr A^c \times D, \hat{w}(\xi,y,z) \neq 0\right\}.\]
Consequently, 
\[
\lambda^{(3)}(\{(\xi,y,z) \in \Omega_\infty, \hat{w}(\xi,y,z) \neq 0\})
\]
\[
\leq \underbrace{\lambda\left(\mathscr A\right)}_{= \hspace{2pt}0}\times \lambda^{(2)}(D) + \int_{\displaystyle \mathscr A^c} \underbrace{\lambda^{(2)}(\{(y,z) \in D, \hat{w}(\xi,y,z) \neq 0\})}_{= \hspace{2pt}0}d\xi.
\]
$\lambda^{(3)}(\{(\xi,y,z) \in \Omega_\infty, \hat{w}(\xi,y,z) \neq 0\} = 0$ and $\hat{w} = 0$ almost everywhere, thus in $L^2(\Omega_\infty)$. Hence $p = 0$, which completes the proof of injectivity for these values of $\beta_v$.

\subsection{Continuous dependence}

Let us prove the continuous dependence \eqref{estimatebound} of the weak solution on the source terms $f\in L^2(\Omega)$ and $\eta \in L^2(\Gamma,\mu)$.

Let us first remark, in accordance with the Fredholm-type decomposition already established, the existence of an inner product $[\cdot,\cdot]_{V(\Omega)}$ equivalent to $\langle \cdot,\cdot\rangle_{V(\Omega)}$, $c\neq 0$ and $K':V(\Omega)\to V(\Omega)$ compact such that
\begin{equation}
\label{repchange}
    \forall p,q\in V(\Omega), \; A(p,q) = [(cId-K')p,q]_{V(\Omega)}.
\end{equation}
Having proved from the previous subsection that $cId-K'$ is bijective and continuous, it is therefore a homeomorphism by the Banach-Schauder theorem, and we set $T = (cId-K')^{-1}\in \mathcal L(V).$

Denote by $W:L^2(\Omega)\oplus L^2(\Gamma,\mu)\to V(\Omega)$ the operator that associates $(f,\eta)$ to the solution of the variational formulation~\eqref{eq:FV} of~\eqref{PH} for $(f,\eta)$, and $S:L^2(\Omega)\oplus L^2(\Gamma,\mu)\to V(\Omega)$ the operator given by the Riesz representation theorem such that
\begin{equation}
\label{Soperator}
    \forall (f,\eta) \in L^2(\Omega)\oplus L^2(\Gamma,\mu),\forall q\in V(\Omega), \; (\eta,\Tr_\Gamma q)_{L^2(\Gamma,\mu)}-(f, q)_{L^2(\Omega)} = [S(f,\eta), q]_{V(\Omega)}.
\end{equation}
It is easy to see that $S$ is linear. For continuity, it suffices to notice that for $(f,\eta) \in L^2(\Omega)\oplus L^2(\Gamma,\mu)$, using Cauchy-Schwarz and Poincaré inequalities ($C_P$ denotes the Poincaré constant) as well as continuity of $\Tr_\Gamma: V(\Omega)\to L^2(\Gamma,\mu)$:
\begin{align*}
[S(f,\eta),S(f,\eta)]_{V(\Omega)} &= (\eta,\Tr_\Gamma (S(f,\eta)))_{L^2(\Gamma,\mu)}-(f, S(f,\eta))_{L^2(\Omega)} \\ &\leq (C_P+\|\Tr_\Gamma\|_{\mathcal L(V(\Omega),L^2(\Gamma,\mu))})(\|f\|_{L^2(\Omega)}+\|\eta\|_{L^2(\Gamma,\mu)})\|S(f,\eta)\|_{V(\Omega)}.
\end{align*}
Thus $S$ is continuous by equivalence of inner products.

From the variational formulation \eqref{eq:FV}, along with~\eqref{repchange} and~\eqref{Soperator}, we deduce that $(f,\eta) \in L^2(\Omega)\oplus L^2(\Gamma,\mu)$ verifies
$$(cId-K')W(f,\eta) = S(f,\eta).$$
Thus $W = TS$, and setting $\hat C= \|T\|_{\mathcal L(V(\Omega))}\|S\|_{\mathcal L(L^2(\Omega)\oplus L^2(\Gamma,\mu),V(\Omega))} > 0$, a constant depending only on $d$, $c_d$, $R$ and $L$ from~\eqref{EqOmega}, $\chi$, $\beta_v$ and other physical constants from~\eqref{EqPhysConstants}, we obtain the promised result:
\begin{equation}
    \forall (f,\eta) \in L^2(\Omega)\oplus L^2(\Gamma,\mu), \|W(f,\eta)\|_{V(\Omega)}\leq \hat C (\|f\|_{L^2(\Omega)}+\|\eta\|_{L^2(\Gamma,\mu)}).
\end{equation}
\end{proof}

\section{Parametric shape optimization of liner distribution}
\label{sec:shapeoptimization}
As the direct problem~\eqref{PH} is weakly well-posed, we consider the optimal control problem of minimization of its energy in the framework of the parametric shape optimization on the boundary~$\Gamma$. 

Let $\chi\in L^\infty(\Gamma,\mu)$ be the characteristic function of the distribution of the liner on $\Gamma$:
\begin{equation}\label{EqChi}
\forall x \in \Gamma, \quad \chi(x) = \begin{cases}
1 & \text{if there is a liner in } x,\\
0 & \text{if there is no liner in } x,
\end{cases}
\end{equation}
 having a fixed $L^1 (\Gamma,\mu)$-norm, consisting in the volume fraction of the liner on $\Gamma$:
 \begin{equation}
\label{quantitygamma}
   0< \gamma := \|\chi\|_{L^1(\Gamma,\mu)}=\int_\Gamma \chi d\mu<\mu(\Gamma).
\end{equation}
We exclude two limit cases $\gamma=0$ and $\gamma=\mu(\Gamma)$ and fix a value $\gamma\in (0,\mu(\Gamma))$. Therefore, 
we define the class of admissible liner distributions:
\begin{equation}\label{EqUad}
	U_{ad}(\gamma) := \left\{\chi \in L^{\infty}(\Gamma,\mu)  \middle| \, \mu\text{-a.e } x \in \Gamma, \chi(x) \in \{0,1\}, 0 < \gamma = \int_{\Gamma} \chi \, d\mu < \mu(\Gamma)\right\}.
\end{equation}
Let us now consider the total acoustical energy of problem~\eqref{PH} which we want to minimize on $U_{ad}(\gamma)$, first for a fixed wave number $k_0>0$ and then for all bounded wavenumber integral $I\subset \R^+$. 
We emphasize that different wave numbers $k_0$ and liner distributions $\chi$ generally correspond to different solutions $p$ of~\eqref{PH} and vary the energy.
As in~\cite{MAGOULES-2025}, we define the following general energy functional $J(k_0,\chi): I\times U_{ad}(\gamma)\to \R$ by
\begin{equation}\label{EqJ}
	 J(k_0,\chi)=a\int_\Omega |u(k_0,\chi)|^2\dx+b\int_\Omega |\nabla u(k_0,\chi)|^2 \dx+d\int_\Gamma |\operatorname{Tr} u(k_0,\chi)|^2 d\mu
\end{equation}
with positive constants $a\ge 0$, $b\ge 0$ and $d\ge0$, $a^2+b^2>0$. If $a\ge0$, and  $b$ with $d$ are strictly positive, the expression of $J$ defines an equivalent norm on $H^1(\Omega)$, and hence, on $V(\Omega)$.  
Therefore, our final aim is to minimize the ``total'' energy on $U_{ad}(\gamma)$:
\begin{equation}\label{EqTotalJ}
	\hat{J}(\chi):=\int_I J(k_0,\chi) d k_0, \quad \min_{\chi\in U_{ad}(\gamma)}\hat{J}(\chi).
\end{equation}

Thus, we formulate two optimization problems:
\begin{definition}\label{DefPOP}\textbf{(Parametric optimization problems)}
	In the assumptions of Theorem~\ref{th:well-posedness} for a fixed $\gamma\in (0,\mu(\Gamma))$, and the source of the noise $f$ 
	\begin{enumerate}
		\item \textbf{for a fixed wavenumber} $k_0>0$, to find $\chi_{opt}\in U_{ad}(\gamma)$ for which there exists the (unique) solution $u(k_0,\chi_{opt})\in V(\Omega)$ of the convected Helmholtz problem with the generalized Myers boundary condition~\eqref{PH} considered with $\chi=\chi_{opt}$, such that
$$J(k_0,\chi_{opt})=\min_{\chi\in U_{ad}(\gamma)}J(k_0,\chi).$$
\item \textbf{for a bounded range of  wavenumbers} $I$, to find $\chi_{opt}\in U_{ad}(\gamma)$ for which there exists for all $k_0\in I$ the (unique) solution $u(k_0,\chi_{opt})\in V(\Omega)$ of  problem~\eqref{PH} considered with $\chi=\chi_{opt}$, such that
$$\hat{J}(\chi_{opt})=\min_{\chi\in U_{ad}(\gamma)}\int_I J(k_0,\chi)d k_0.$$
	\end{enumerate}
\end{definition}


%

\subsection{Relaxation method}
By its definition, as it was also mentioned in~\cite[Sec.~3]{MAGOULES-2025}, the set of the admissible shapes $U_{ad}(\beta)$ is not closed for the weak$^*$ convergence of $L^\infty(\Gamma,\mu)$~\cite{HENROT-2005}:
if a sequence of characteristic functions $(\chi_n)_{n\in \N}$ converges weakly$^*$ in $L^\infty(\Gamma,\mu)$ to a function $h\in L^\infty(\Gamma,\mu)$, it does not follows  that the weak$^*$ limit function $h$ is a characteristic function, $i.e.$ takes only two values $0$ and $1$. Hence, \(U_{ad}(\gamma)\) is not weakly$^*$ compact. To address this issue, we follow the standard relaxation approach~\cite[p.277]{HENROT-2005}, consisting in introducing the (convex) closure of \(U_{ad}(\gamma)\) in the weakly$^*$ topology of $L^\infty(\Gamma,\mu)$: 
\begin{equation}
    \label{Uadstar}
    U_{ad}^*(\gamma) := \left\{\chi \in L^{\infty}(\Gamma,\mu) \, \middle| \,0\leq \chi \leq 1 \hspace{2pt} \mu\text{-a.e},\hspace{2pt} 0 < \gamma = \int_{\Gamma} \chi \, d\mu < \mu(\Gamma)\right\}.
\end{equation} 
Let us for simplicity normalize the values of $\mu$ on $\Gamma$ and suppose in what follows that $\mu(\Gamma)=1$. This makes of $\gamma$ the percentage rate of the liner on $\Gamma$, $0<\gamma<1$.
We notice that  $\|\chi\|_{L^\infty(\Gamma,\mu)}=1$ for all $\chi\in U_{ad}(\gamma)$, while 
for all $\chi\in U_{ad}^*(\gamma)$ it holds 
\begin{equation}\label{EqBLInfNorm}
	0<\gamma\le \| \chi\|_{L^\infty(\Gamma,\mu)}\le 1.
\end{equation}
By~\cite[Theorem~3.2]{MAGOULES-2025} and~\cite[Proposition 7.2.14]{HENROT-2005},  $U_{ad}^*(\gamma)$ is the weak$^*$ closed convex hull of $U_{ad}(\gamma)$ and $U_{ad}(\gamma)$ is exactly the set of extreme points of the convex set $U^*_{ad}(\gamma)$. 

We denote by $J^*$ the natural extension of $J$ on the relaxed space $U_{ad}^*(\gamma)$: 
\begin{align}
\label{EqJ*}
\begin{split}
	\forall \chi\in U^*_{ad}(\gamma), \quad J^*(k_0,\chi)&=a\int_\Omega| u(k_0,\chi)|^2\dx+b\int_\Omega |\nabla u(k_0,\chi)|^2 \dx\\
    &+d\int_\Gamma |\operatorname{Tr} u(k_0,\chi)|^2 d\mu,
\end{split}
\end{align}
which in addition satisfies $J^*(k_0,\chi)|_{U_{ad}(\gamma)}=J(k_0,\chi)$. 
Here, $u(k_0,\chi)$ is the weak solution of system~\eqref{PH} found for a chosen $(k_0,\chi)$.
We also denote
\begin{equation}\label{EqhatJ*}
	\forall \chi\in U^*_{ad}(\gamma) \quad \hat{J}^*(\chi)=\int_I J^*(k_0,\chi) d k_0,
\end{equation}
satisfying $\hat{J}^*(\chi)|_{U_{ad}(\gamma)}=\hat{J}(\chi)$.

To solve the parametric optimization problem on $U^*_{ad}(\gamma)$ we need to ensure that the constant $\hat{C}$ in estimate~\eqref{estimatebound} does not depend on $\chi$, when $\chi\in U^*_{ad}(\gamma)$. 
As $\mu(\Gamma_{in})>0$, then it follows, as explained in~\cite{MAGOULES-2025}, from the upper uniform boundedness of the $L^\infty$ norm of all $\chi$ on $U^*_{ad}(\gamma)$ (see~\eqref{EqBLInfNorm}) and the equivalence of norms with uniform on $\chi$ constants:   for all $\chi \in U^*_{ad}(\gamma)$ 
there exist $C_0>0$ independent on $\chi \in U^*_{ad}(\gamma)$ such that
\begin{equation}\label{EqNormEq}
	\forall v\in V(\Omega) \quad \|v\|_{H^1_0(\Omega)}\le \|v\|_{V(\Omega),\chi}\le C_0 \|v\|_{H^1_0(\Omega)}.
\end{equation}
To prove it, we use the continuity of the trace operator $\Tr_\Gamma\in \mathcal{L}(V(\Omega),L^2(\Gamma,\mu))$ and the differential operator  $\mathcal{D}_1\in \mathcal{L}(H^1(\Omega),L^2(\Omega))$ (see~\eqref{D1andK} for definition), and the Poincaré inequality on the cylindrical domain $\Omega$ to obtain $$C_0=1+C(k_0,\beta_v, M_0) C(\|\mathrm{Tr}_\Gamma\|_{\mathcal{L}(V(\Omega),L^2(\Gamma,\mu))}, \|\mathcal{D}_1\|_{\mathcal{L}(H^1(\Omega),L^2(\Omega))})C_{P}(\Omega,\mu),$$ independent on $\chi$. Here, by $C_P$ is denoted the Poincaré constant. 

\begin{lemma}\label{LemIndepChi}
	Let $\gamma\in ]0,1[$ (for $\mu(\Gamma)=1$) be fixed and all assumptions of Theorem~\ref{th:well-posedness} hold. Then for all $\chi\in U^*_{ad}(\gamma)$, there exists a constant $\hat{C}^*>0$, depending only on  $k_0$, $\beta_v$, $M_0$ and on $C_P$ (the Poincaré uniform constant depending only on $L$, $R$ (see~\eqref{EqOmega}), $d$ and $A$ (see~\eqref{defmu})), \textit{but not on $\chi$}, such that estimate~\eqref{estimatebound} holds for the corresponding weak solution of~\eqref{PH}. %
\end{lemma}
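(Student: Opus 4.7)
The plan is to revisit the proof of Theorem~\ref{th:well-posedness} and track carefully the dependence of every constant on $\chi$, showing each can be majorized uniformly for $\chi\in U^*_{ad}(\gamma)$. That proof yields $W_\chi=T_\chi S_\chi$ for the solution map, so the constant in~\eqref{estimatebound} satisfies $\hat{C}_\chi\leq \|T_\chi\|\cdot\|S_\chi\|$ with $S_\chi$ coming from the Riesz representation of the source and $T_\chi=(cId-K'_\chi)^{-1}$. For $S_\chi$, the explicit estimate $\|S_\chi\|\leq C_P+\|\Tr_\Gamma\|_{\mathcal{L}(V(\Omega),L^2(\Gamma,\mu))}$ proved in the closing lines of Theorem~\ref{th:well-posedness}, combined with the uniform norm equivalence~\eqref{EqNormEq}, is clearly $\chi$-independent. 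The real work---and the main obstacle---is the uniform bound on $\|T_\chi\|_{\mathcal{L}(V(\Omega))}$, which I would establish by a contradiction--compactness argument.

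Assume for contradiction that there exist sequences $(\chi_n)\subset U^*_{ad}(\gamma)$ and $(p_n)\subset V(\Omega)$ with $\|p_n\|_{V(\Omega),\chi_n}=1$ and $\epsilon_n:=\sup_{\|q\|_{V(\Omega),\chi_n}\leq 1}|A_{\chi_n}(p_n,q)|\to 0$. By~\eqref{EqNormEq}, $(p_n)$ is bounded in $H^1(\Omega)$, so up to a subsequence $p_n\rightharpoonup p^*$ in $H^1(\Omega)$ and strongly in $L^2(\Omega)$, while $\Tr_{\partial\Omega}p_n\to\Tr_{\partial\Omega}p^*$ strongly in $L^2(\partial\Omega,\mu)$ by compactness of the trace; by Banach--Alaoglu, $\chi_n\rightharpoonup^*\chi^*\in U^*_{ad}(\gamma)$ in $L^\infty(\Gamma,\mu)$. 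Testing with $q=p_n$ in the assumed bound gives $|A_{\chi_n}(p_n,p_n)|\leq \epsilon_n\to 0$; its imaginary part, whose three terms are each non-negative under the admissibility of $\beta_v$ as in~\eqref{Michelle}, yields
\[\|\Tr_{\Gamma_{out}}p_n\|_{L^2(\Gamma_{out},\mu)}\to 0,\qquad \|\Tr_\Gamma(\mathcal{D}_1 p_n)\|_{\chi_n}\to 0,\qquad \|\Tr_\Gamma p_n\|_{\chi_n}\to 0.\]
Passing to the limit in $\int_\Gamma\chi_n|\Tr_\Gamma p_n|^2\,d\mu$ via the $L^\infty$-weak-$*$/$L^1$-strong duality gives $\Tr_\Gamma p^*=0$ $\mu$-a.e.\ on $\{\chi^*>0\}$, while strong trace convergence gives $\Tr_{\Gamma_{out}}p^*=0$.

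The delicate step is passing to the limit in $A_{\chi_n}(p_n,q)\to 0$ for a fixed test function $q\in V(\Omega)$: the volume terms and $\langle\Tr_\Gamma p_n,\Tr_\Gamma q\rangle_{\chi_n}$ are handled by standard weak/strong compactness, but $\Tr_\Gamma(\mathcal{D}_1 p_n)$ is a priori only controlled in the degenerate norm $\|\cdot\|_{\chi_n}$, with no uniform $L^2(\Gamma,\mu)$ bound available---this is where the true obstacle lies. It is circumvented by Cauchy--Schwarz in the weighted inner product:
\[|\langle\Tr_\Gamma(\mathcal{D}_1 p_n),\Tr_\Gamma(\mathcal{D}_1 q)\rangle_{\chi_n}|\leq \|\Tr_\Gamma(\mathcal{D}_1 p_n)\|_{\chi_n}\cdot\|\Tr_\Gamma(\mathcal{D}_1 q)\|_{L^2(\Gamma,\mu)}\to 0.\]
The limit identity then reduces to $(\nabla p^*,\nabla q)_{(L^2(\Omega))^3}-(\mathcal{D}p^*,\mathcal{D}q)_{L^2(\Omega)}=0$ for every $q\in V(\Omega)$, which is the variational form of $\Delta p^*+\mathcal{D}^2p^*=0$ in $\Omega$ with $\Tr_{\Gamma_{in}}p^*=\Tr_{\Gamma_{out}}p^*=0$ and $\partial_n p^*=0$ on $\Gamma$; the Fourier-transform argument in the injectivity step of Theorem~\ref{th:well-posedness} then forces $p^*=0$. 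Finally, the real part of $A_{\chi_n}(p_n,p_n)\to 0$, together with $p_n\to 0$ in $L^2(\Omega)$ (so that $\|\mathcal{D}p_n\|_{L^2(\Omega)}^2-M_0^2\|\partial_x p_n\|_{L^2(\Omega)}^2\to 0$) and the vanishing of the $\chi_n$-weighted trace terms, gives $(1-M_0^2)\|\partial_x p_n\|_{L^2(\Omega)}^2+\|\nabla_\perp p_n\|_{L^2(\Omega)^2}^2\to 0$; the subsonic hypothesis $M_0<1$ then forces $\|\nabla p_n\|_{L^2(\Omega)^3}\to 0$, and therefore $\|p_n\|_{V(\Omega),\chi_n}\to 0$, contradicting $\|p_n\|_{V(\Omega),\chi_n}=1$.
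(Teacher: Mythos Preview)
Your proof is correct and substantially more complete than what the paper itself provides. The paper's treatment of Lemma~\ref{LemIndepChi} is essentially a one-line reduction: it records the uniform norm equivalence~\eqref{EqNormEq} (which you also use) and then asserts, by reference to~\cite{MAGOULES-2025}, that the $\chi$-independence of $\hat C$ follows from that equivalence together with the uniform $L^\infty$-bound~\eqref{EqBLInfNorm} on $\chi\in U^*_{ad}(\gamma)$. No argument is written out for the hard part---the uniform control of $\|T_\chi\|=\|(cId-K'_\chi)^{-1}\|$---which is precisely what you supply via the contradiction--compactness scheme. Your route (normalize, extract weak $H^1$ and weak-$*$ $L^\infty$ limits, use $\Im m\, A_{\chi_n}(p_n,p_n)\to 0$ to kill the boundary contributions, pass to the limit in $A_{\chi_n}(p_n,q)$ for fixed $q$ using the weighted Cauchy--Schwarz bound to dispose of the $\mathcal{D}_1$-trace term, invoke the Fourier-transform injectivity step to get $p^*=0$, then use $\Re e\, A_{\chi_n}(p_n,p_n)\to 0$ together with $M_0<1$ to force $\|\nabla p_n\|\to 0$) is the standard and correct way to upgrade Fredholm invertibility to a uniform-in-parameter estimate, and is fully compatible with---indeed, most likely what underlies---the cited external argument.

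One minor imprecision worth flagging: when $\beta_v=0$ one has $K=0$, so the third term in~\eqref{Michelle} carries a zero coefficient and the convergence $\|\Tr_\Gamma p_n\|_{\chi_n}\to 0$ does not follow from $\Im m\, A_{\chi_n}(p_n,p_n)\to 0$. This is harmless, since in that case the $K^2$-term is absent from $A$ altogether, so you need neither $\Tr_\Gamma p^*=0$ on $\{\chi^*>0\}$ nor the vanishing of $\|\Tr_\Gamma p_n\|_{\chi_n}$ anywhere downstream; only $\|\Tr_{\Gamma_{out}}p_n\|\to 0$ and $\|\Tr_\Gamma(\mathcal{D}_1 p_n)\|_{\chi_n}\to 0$ are required, and those do follow.
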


Therefore, the minimization problem becomes: 
\begin{equation}
\label{minimization *}
    \displaystyle J^*(k_0,\chi^*) = \min_{\chi \in U_{ad}^*(\gamma)}J^*(k_0,\chi) \hbox{ and } \hat{J}^*(\chi^*) = \min_{\chi \in U_{ad}^*(\gamma)}\hat{J}^*(\chi).
\end{equation}
First we show the weak$^*$ continuous dependence of the solution and the energy on the liner distribution $\chi$ for a fixed wavenumber $k_0>0$. As $k_0$ is fixed, we simplify the notations by omitting $k_0$ and instead of
	$p(f_0,\chi)$ and  $J^*(f_0,\chi)$  are denoted by $p(\chi)$  and  $J^*(\chi,u(\chi))$  respectively.
\begin{proposition}[Continuity on $\chi$]
Let $\chi\in U^*_{ad}(\gamma)$ for a fixed $\gamma\in (0,1)$, the assumptions of Theorem~\ref{th:well-posedness} hold 
and $p(\chi)\in V(\Omega)$ be the weak solution of the variational formulation~\eqref{eq:FV}. 
\begin{enumerate}
	\item[(i)] The mapping $\chi \longmapsto p(\chi)$ is a continuous and compact operator from $ U^*_{ad}(\gamma)$ to $V(\Omega)$,
	\item[(ii)] The functional $J^*$ is continuous on $U_{ad}^*(\gamma)$ endowed with the weak$^*$ topology.
\end{enumerate}
\end{proposition}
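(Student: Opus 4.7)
The plan is to take any sequence $\chi_n \in U^*_{ad}(\gamma)$ with $\chi_n \xrightarrow{*} \chi$ in $L^\infty(\Gamma,\mu)$ (the limit automatically lies in $U^*_{ad}(\gamma)$, since the constraints in~\eqref{Uadstar} are weak$^*$ closed), and to prove the strong convergence $p(\chi_n) \to p(\chi)$ in $V(\Omega)$. From this, item (i) follows immediately: continuity is exactly this convergence, and compactness follows from the weak$^*$ compactness of $U^*_{ad}(\gamma)$, obtained by Banach--Alaoglu inside the unit ball of $L^\infty(\Gamma,\mu)$. Item (ii) then follows because $J^*(k_0,\chi)$ is the composition of $\chi \mapsto p(\chi)$ with the squared norms in $L^2(\Omega)$, $(L^2(\Omega))^3$ and $L^2(\Gamma,\mu)$, which together are continuous on $V(\Omega)$.

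\textbf{Weak convergence and identification of the limit.} By Lemma~\ref{LemIndepChi} the sequence $(p(\chi_n))$ is bounded in $V(\Omega)$ uniformly in $n$; extract a weakly convergent subsequence (not relabeled) with limit $p^* \in V(\Omega)$. Since $\mathcal{D}_1 p(\chi_n)$ is also bounded in $H^1(\Omega)$, compactness of the trace $H^1(\Omega) \to L^2(\del \Omega,\mu)$ yields strong $L^2(\Gamma,\mu)$-convergence of $\Tr_\Gamma p(\chi_n)$ and $\Tr_\Gamma(\mathcal{D}_1 p(\chi_n))$; similarly we get strong convergence in $L^2(\Omega)$ of $p(\chi_n)$ and in $L^2(\Gamma_{out},\mu)$ of $\Tr_{\Gamma_{out}} p(\chi_n)$. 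The delicate point in passing to the limit in $A_{\chi_n}(p(\chi_n),q) = \ell(q)$ for fixed $q\in V(\Omega)$ is the $\chi$-dependent product. Writing $g_n := \Tr_\Gamma(\mathcal{D}_1 p(\chi_n)) \to g_\infty := \Tr_\Gamma(\mathcal{D}_1 p^*)$ in $L^2(\Gamma,\mu)$ and $h := \Tr_\Gamma(\mathcal{D}_1 q)$, I decompose
\begin{equation*}
\int_\Gamma \chi_n g_n \overline{h}\, d\mu = \int_\Gamma \chi_n (g_n - g_\infty)\overline{h}\, d\mu + \int_\Gamma (\chi_n - \chi)\, g_\infty \overline{h}\, d\mu + \int_\Gamma \chi\, g_\infty \overline{h}\, d\mu,
\end{equation*}
in which the first piece vanishes using $\|\chi_n\|_\infty \leq 1$ and strong $L^2$-convergence, and the second because $g_\infty \overline{h} \in L^1(\Gamma,\mu)$ is tested against $\chi_n - \chi \xrightarrow{*} 0$. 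The same splitting handles $\int_\Gamma \chi_n \Tr_\Gamma p(\chi_n)\overline{\Tr_\Gamma q}\,d\mu$; the remaining terms in $A_{\chi_n}(p(\chi_n),q)$ are $\chi$-independent and pass by weak/strong convergence. Thus $A_\chi(p^*, q) = \ell(q)$ for every $q$, and by uniqueness in Theorem~\ref{th:well-posedness}, $p^* = p(\chi)$; a standard subsequence argument upgrades this to convergence of the whole sequence.

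\textbf{Upgrading to strong convergence.} Set $r_n := p(\chi_n) - p(\chi) \rightharpoonup 0$ in $V(\Omega)$. Subtracting $A_\chi(p(\chi),q) = \ell(q) = A_{\chi_n}(p(\chi_n),q)$ and using the Fredholm decomposition $A_{\chi_n} = \Theta_{\chi_n} + \xi_{\chi_n}$ from Section~\ref{sub:fredholm}, with $q = r_n$ one obtains
\begin{equation*}
\Theta_{\chi_n}(r_n, r_n) = -\xi_{\chi_n}(r_n, r_n) - (A_{\chi_n} - A_\chi)(p(\chi), r_n).
\end{equation*}
Both right-hand side terms vanish: $\xi_{\chi_n}(r_n,r_n)$ is made of $L^2(\Omega)$-, $L^2(\Gamma_{out},\mu)$- and $L^2(\Gamma,\mu)$-inner products of $r_n$ with itself (compactness, as used in Lemma~\ref{le:xi compact}); and $(A_{\chi_n} - A_\chi)(p(\chi), r_n)$ vanishes by the same decomposition as above, now with the fixed factor being $\Tr_\Gamma(\mathcal{D}_1 p(\chi))$ (resp. $\Tr_\Gamma p(\chi)$) and the varying factor being $\Tr_\Gamma(\mathcal{D}_1 r_n) \to 0$ (resp. $\Tr_\Gamma r_n \to 0$) strongly in $L^2(\Gamma,\mu)$. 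The coercivity of $\Theta$ in Lemma~\ref{le:theta coercive}, whose constant depends only on $M_0$, $|Y|$ and $Z_0/k_0$ and \emph{not} on $\chi_n \in U^*_{ad}(\gamma)$, then forces $\|\nabla r_n\|_{L^2(\Omega)} \to 0$; combined with $\|\Tr_\Gamma(\mathcal{D}_1 r_n)\|_\chi \leq \|\Tr_\Gamma(\mathcal{D}_1 r_n)\|_{L^2(\Gamma,\mu)} \to 0$, this yields $\|r_n\|_{V(\Omega),\chi} \to 0$, and the conclusion of (i) and (ii) follows.

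\textbf{Main obstacle.} The principal difficulty is passing to the limit in products of the form $\chi_n \cdot (\text{$n$-dependent trace of }p(\chi_n))$ inside the bilinear form: weak$^*$ convergence of $\chi_n$ together with weak convergence of $p(\chi_n)$ is in general too weak to do this. The mechanism that saves the argument is the compactness of the trace $H^1(\Omega) \to L^2(\Gamma,\mu)$, which promotes the relevant trace factors to strongly convergent ones; the three-term decomposition above then cleanly isolates the only piece in which $\chi_n - \chi$ is tested, and it is tested against a fixed $L^1(\Gamma,\mu)$ function, as required by the weak$^*$ convergence hypothesis.
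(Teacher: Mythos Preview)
Your proof is correct and complete, but it takes a genuinely different route from the paper's. The paper recasts the difference $v_m = p(\chi_m) - p(\chi)$ as the solution of problem~\eqref{PH} with the \emph{fixed} coefficient $\chi$ and a source term $\eta_m$ built from $(\chi_m-\chi)$ and traces of $p_m$; it then works entirely with the abstract Fredholm representation $A_\chi = cId - K'$ of~\eqref{repchange}: weak convergence of $v_m$ to $0$ comes from bijectivity of $cId - K'$, and strong convergence from $A_\chi(v_m,v_m)\to 0$ together with compactness of $K'$. Your argument instead passes to the limit directly in $A_{\chi_n}(p(\chi_n),q)=\ell(q)$ via the three-term splitting (which is the natural way to couple weak$^*$ convergence of $\chi_n$ with strong trace convergence), and for strong convergence you use the coercive--compact splitting $A_{\chi_n}=\Theta_{\chi_n}+\xi_{\chi_n}$, exploiting that the coercivity constant in Lemma~\ref{le:theta coercive} is independent of $\chi_n$. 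Your route is slightly more elementary in that it avoids invoking the abstract operator $cId-K'$ and sidesteps the somewhat formal definition of the source $\eta_m$ (which in the paper tacitly involves the trace of a second-order expression). The paper's route, once the Fredholm machinery is set up, is a bit more streamlined. One small point worth making explicit in your write-up: the claim that $\mathcal{D}_1 p(\chi_n)$ is bounded in $H^1(\Omega)$ is not immediate from the $V(\Omega)$-norm~\eqref{Vnorm}; it follows from the closed graph theorem applied to $\mathcal{D}_1:V(\Omega)\to H^1(\Omega)$, using that $V(\Omega)$ is complete.
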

\begin{proof}
    Let us prove point (i), then point (ii) will follow immediately. Let $\chi_m$ be a sequence that converges weakly$^*$ to $\chi$ in $L^\infty(\Gamma,\mu)$, with $\chi \geq 0$ and for all $m \in \mathbb{N}, \chi_m \geq 0$. Let $p_m$ be the solution of \eqref{PH} for $\chi_m$ and $p$ for $\chi$. Then $v_m = p_m - p $ is a solution of:
\begin{equation*}
    (P_{H,m}) \hspace{2pt} : \hspace{2pt}
    \begin{cases}
    \displaystyle \Delta  v_m  + \mathcal{D}^2 v_m = 0 \in L^2(\Omega),
    \\
    \displaystyle \frac{\partial v_m}{\partial n} + iY\frac{Z_0}{k_0}\chi \Tr_\Gamma \Bigl[ \mathcal{D}(\mathcal{D} + iM_0\beta_v\partial_x) v_m \Bigr] = \eta_m,
    \\
    \displaystyle \Tr_{\Gamma_{in}}v_m = 0,
    \\
    \displaystyle \frac{\partial v_m}{\partial n} + ik \Tr_{\Gamma_{out}}v_m = 0.
    \end{cases} \label{PHm}
\end{equation*}
where $\eta_m = \displaystyle - ik_0YZ_0(\chi_m - \chi)\Tr_\Gamma \Bigl[ \mathcal{D}(\mathcal{D} + iM_0\beta_v\partial_x) p_m\Bigr]$.

This problem is well-posed according to Theorem~\ref{th:well-posedness}. Since $d$, $c_d$, $R$ and $L$ from~\eqref{EqOmega}, $\chi$, $\beta_v$ and other physical constants from~\eqref{EqPhysConstants} do not depend on $m$, we have the existence of an uniform on $m$ constant $C>0$ such that 
\begin{equation*}
\label{eq:Ineg}
 \forall m\in \mathbb{N},   \|v_m\|_{V(\Omega),\chi}\leq C\|\eta_m\|_{L^2(\Gamma,\mu)}.
\end{equation*}
Furthermore, without loss of generality (otherwise switch to the equivalent inner product), we have in accordance with~\eqref{repchange} that
\begin{equation}
\label{eq:PS}
   \forall p,q\in V(\Omega), \quad \langle (cId - K')p,q \rangle_{V(\Omega),\chi} = A(p,q);
\end{equation}
where $c \in \mathbb{C}\setminus \{0\}$ is a constant and $K':V(\Omega)\to V(\Omega)$ a compact operator.
Consequently,
\begin{equation}
    \label{eq:hmvm}
    \forall q\in V(\Omega), \, \forall m\in \mathbb{N}, \quad \langle (cId - K')v_m,q\rangle_{V(\Omega),\chi} = A(v_m,q)=-(\eta_m,\Tr_\Gamma q)_{L^2(\Gamma,\mu)}.
\end{equation}

Firstly,
since $\chi_m \stackrel{\ast}{\rightharpoonup} \chi$ in $L^\infty(\Gamma,\mu)$, the sequence $(\chi_m)_{m \in \mathbb{N}}$ is bounded in $L^\infty(\Gamma,\mu)$ and thus the same is true for $\displaystyle(\chi - \chi_m)_{m \in \mathbb{N}}$. Moreover,
\( p \) is the weak solution of ($P_H$) associated with the function \( \chi \), thus it belongs to \( V(\Omega) \) and its trace on \( \Gamma \) is well defined and naturally belongs to \( L^2(\Gamma,\mu) \). Furthermore, the norm of the trace of \( p \) on \( \Gamma \) does not depend on \( m \).

Thus, $(v_m)_{m \in \mathbb{N}}$ is bounded in $V(\Omega)$, which is a Hilbert space. Therefore, there exists a subsequence that converges weakly:
\begin{center}
    $\exists(m_j)_{j \in \mathbb{N}} \subset \mathbb{N}$ increasing s.t. $v_{m_j} \rightharpoonup v$ in $V(\Omega)$ with $v \in V(\Omega)$.
\end{center}
We will now show that $v = 0$. According to \eqref{eq:hmvm}:
\[\forall q\in V(\Omega), \, \forall j\in \mathbb{N}, \quad \langle (cId - K')v_{m_j},q \rangle_{V(\Omega),\chi} = -(\eta_{m_j},\Tr_\Gamma q)_{L^2(\Gamma,\mu)}.\]
Taking the limit, 
\[\forall q\in V(\Omega), \quad \langle (cId - K')v,q\rangle_{V(\Omega),\chi} = 0, \]
by the uniqueness of the weak limit, and because $\eta_{m_j}\rightharpoonup 0$.
Since the operator $cId - K'$ is bijective according to Fredholm's theorem, and taking $q =(cId - K')v$, we conclude that $v=0$.

Thus, we have shown that $0$ is the only weak accumulation point of the sequence $(v_m)$. Therefore, $v_m\rightharpoonup 0$.
Next, using \eqref{eq:hmvm} once again with $p=q=v_m$, it follows that $A(v_m,v_m) \longrightarrow 0$.
Indeed, the Cauchy-Schwarz inequality allows us to bound this term:
$$|(\eta_m,\Tr_\Gamma v_m)_{L^2(\Gamma,\mu)}| \leq \|\eta_m\|_{L^2(\Gamma,\mu)}\|\Tr_\Gamma v_m\|_{L^2(\Gamma,\mu)}$$
and the result is immediate with the compactness of the operator $\Tr_\Gamma : V(\Omega) \longrightarrow L^2(\Gamma,\mu)$, which gives us the strong convergence of the sequence $(\Tr_\Gamma v_m)_m$.
Finally,
\[\langle (cId - K')v_m,v_m \rangle_{V(\Omega),\chi} = A(v_m,v_m) \longrightarrow 0.\]
Therefore,
\[c\|v_m\|^2_{V(\Omega),\chi} - \langle K'v_m,v_m  \rangle_{V(\Omega),\chi}  \longrightarrow 0.  
\]
Hence, since $\langle K'v_m,v_m  \rangle_{V(\Omega),\chi}\longrightarrow 0$ due to the compactness of the operator $K'$, we deduce that 
\[ \|v_m\|^2_{V(\Omega),\chi} \longrightarrow 0. 
\]
Thus $v \longrightarrow 0$, hence the continuity of the mapping $\chi \longmapsto p(\chi)$.\end{proof}

\subsection{Existence of an optimal liner distribution }

From previous results, we deduce the following theorem.
\begin{theorem}[Existence of a minimizer]\label{ThOptShape}
Let $\Omega\subset \bR^3$ be the cylindrical domain defined in \eqref{EqOmega} and all assumptions of Theorem~\ref{th:well-posedness} are satisfied for a fixed $d$-upper regular measure 
$\mu$ with $d\in (1,2]$, $\mu(\Gamma)=1$, and $\beta_v \in \mathscr{B}_v \cup \{0\}$ with $\mathscr{B}_v$ defined by \eqref{poissondef}.

 Then for fixed sources $f\in L^2(\Omega)$, $\eta \in L^2(\Gamma,\mu)$ and 
	for a given liner distribution quantity $\gamma\in ]0, 1[$,  there exists (at least one) optimal distribution $\chi^{opt}\in U^*_{ad}(\beta)$ and the corresponding optimal solution $u(f_0,\chi^{opt})\in V(\Omega)$ of system~\eqref{PH}, such that
	\begin{equation}\label{EqExistMin}
	J^*(k_0,\chi^{opt})=\min_{\chi \in U^*_{ad}(\beta)} J^*(k_0,\chi)=\inf_{\chi \in U_{ad}(\beta)} J(k_0,\chi),	
	\end{equation}
	and there exists $\hat{\chi}^{opt}\in U^*_{ad}(\beta)$ such that on a fixed bounded plage of wavenumbers $I\subset \R^{+ *}$
	\begin{equation}\label{EqExistFreqMin}   
		\hat{J}^*(\hat{\chi}^{opt})=\min_{\chi \in U^*_{ad}(\beta)} \hat{J}^*(\chi)=\inf_{\chi \in U_{ad}(\beta)} \hat{J}(\chi).		
	\end{equation}
\end{theorem}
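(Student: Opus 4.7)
The plan is to apply the direct method of the calculus of variations on the relaxed admissible set $U^*_{ad}(\gamma)$, exploiting the weak$^*$ continuity of the energy functional (established in the preceding proposition) together with weak$^*$ compactness of the constraint set. First I would verify that $U^*_{ad}(\gamma)$ is weak$^*$ compact in $L^\infty(\Gamma,\mu)$: every element is bounded in $L^\infty$-norm by $1$, so by the Banach--Alaoglu theorem the set is relatively weak$^*$ compact; closedness follows because the pointwise constraint $0\le \chi\le 1$ $\mu$-a.e.\ passes to weak$^*$ limits, and because $\mathbf{1}_\Gamma\in L^1(\Gamma,\mu)$ (as $\mu(\Gamma)=1$) makes the volume constraint $\int_\Gamma \chi\,d\mu=\gamma$ weak$^*$ continuous.

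Given this compactness and the continuity of $J^*(k_0,\cdot)$, the existence of $\chi^{opt}\in U^*_{ad}(\gamma)$ realizing $\min_{U^*_{ad}(\gamma)} J^*(k_0,\cdot)$ follows by taking any minimizing sequence $(\chi_n)$, extracting a weak$^*$ convergent subsequence $\chi_n\stackrel{*}{\rightharpoonup}\chi^{opt}$, and invoking continuity. To identify this minimum with $\inf_{U_{ad}(\gamma)} J(k_0,\cdot)$, I would combine the inclusion $U_{ad}(\gamma)\subset U^*_{ad}(\gamma)$ (which gives $\min_{U^*_{ad}}J^*\le \inf_{U_{ad}}J$) with the weak$^*$ density of $U_{ad}(\gamma)$ in $U^*_{ad}(\gamma)$ cited from \cite[Proposition~7.2.14]{HENROT-2005}: approximating any $\chi\in U^*_{ad}(\gamma)$ by a sequence $(\sigma_n)\subset U_{ad}(\gamma)$ and using continuity yields $\inf_{U_{ad}} J\le J^*(\chi)$ for every $\chi\in U^*_{ad}(\gamma)$, hence the reverse inequality~\eqref{EqExistMin}.

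For the wavenumber-interval version~\eqref{EqExistFreqMin} the same scheme applies once $\hat{J}^*$ is shown to be weak$^*$ continuous on $U^*_{ad}(\gamma)$. The pointwise-in-$k_0$ continuity $J^*(k_0,\chi_n)\to J^*(k_0,\chi)$ has already been proved, so by the dominated convergence theorem it suffices to exhibit an integrable-in-$k_0$ majorant for $J^*(k_0,\chi_n)$ uniform in $n$. Lemma~\ref{LemIndepChi} delivers the estimate $\|u(k_0,\chi_n)\|_{V(\Omega),\chi_n}\le \hat{C}^*(k_0,\beta_v,M_0,C_P)(\|f\|_{L^2(\Omega)}+\|\eta\|_{L^2(\Gamma,\mu)})$ with $\hat{C}^*$ independent of $\chi_n$; combined with the equivalence of norms~\eqref{EqNormEq} and the continuity of the Sobolev trace, this bounds $J^*(k_0,\chi_n)$ by a function of $k_0$ that must be shown to be locally bounded on $I$.

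The main obstacle is precisely this last step: controlling the $k_0$-dependence of $\hat{C}^*$ uniformly on the bounded range $I$. This requires tracking the influence of $k_0$ through the operators $\mathcal{D}$, $\mathcal{D}_1$ and $K$ defined in~\eqref{eqD} and~\eqref{D1andK}, through the coercivity constant of $\Theta$ in Lemma~\ref{le:theta coercive}, and through the compact perturbation $\Xi$ of Lemma~\ref{le:xi compact}, so as to conclude that $k_0\mapsto \hat{C}^*(k_0,\ldots)$ is continuous (or at least bounded) on $I$. Once this technical verification is settled, the existence of $\hat{\chi}^{opt}$ minimizing $\hat{J}^*$ and the equality with $\inf_{U_{ad}}\hat{J}$ follow exactly as in the fixed-$k_0$ case by the compactness--continuity--density argument above.
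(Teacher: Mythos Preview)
Your approach is essentially the same as the paper's: direct method on the relaxed set, weak$^*$ compactness of $U^*_{ad}(\gamma)$, weak$^*$ continuity of the functional, and density of $U_{ad}(\gamma)$ in its convex hull to identify the relaxed minimum with the original infimum. The paper's proof is in fact terser than yours on every point.

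Regarding what you flag as the ``main obstacle'' --- the uniform-in-$k_0$ control of $\hat{C}^*$ needed to pass the limit under the integral defining $\hat{J}^*$ --- the paper does not address this either. Its proof simply asserts that $\hat{J}^*$ is weak$^*$ continuous ``by the weakly$^*$ continuity of $p(\cdot,\chi)$ and the definitions of $J^*$ and $\hat{J}^*$'', without isolating a dominated-convergence majorant or tracking the $k_0$-dependence through $\mathcal{D}$, $\mathcal{D}_1$, $K$, $\Theta$, and $\Xi$. So your caution here is well placed, but you will not find the missing verification in the paper; it is taken for granted.
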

\begin{proof}    
We consider  minimizing sequences $\displaystyle (\chi_j)_{j\in \mathbb{N}}\subset U^*_{ad}(\gamma)$ and $\displaystyle (\hat{\chi}_j)_{j\in \mathbb{N}}\subset U^*_{ad}(\gamma)$ such that $\displaystyle J^*(k_0,\chi_j) \xrightarrow{j\to +\infty} \inf_{\chi \in U^*_{ad}(\gamma)}J^*(k_0,\chi)$ and $\displaystyle \hat{J}^*(\hat{\chi}_j) \xrightarrow{j\to +\infty} \inf_{\chi \in U^*_{ad}(\gamma)}\hat{J}^*(\chi)$ respectively. As $U^*_{ad}(\gamma)$ is weakly$^*$ compact (in $L^\infty(\Gamma,\mu)$), there exist subsequences of the minimizing sequences weakly$^*$ converging in  $U^*_{ad}(\gamma)$ to $\chi^{opt},$ $\hat{\chi}^{opt} \in U^*_{ad}(\gamma)$ (and the corresponding solutions of the convected Helmholtz system with the generalized Myers boundary condition~\eqref{PH}) respectively. Let us still denote these minimizing subsequences by $\displaystyle (\chi_j)_{j\in \mathbb{N}}$ and $\displaystyle (\hat{\chi}_j)_{j\in \mathbb{N}}$ respectively.
Thanks to the weakly$^*$ continuity of $J^*$ and $\hat{J}^*$ on $U^*_{ad}(\gamma)$ (by the weakly$^*$ continuity of $p(\cdot,\chi)$ and the definitions of $J^*$ and $\hat{J}^*$), 
\begin{equation*}
J^*(k_0,\chi^{opt}) = \lim_{j\to +\infty} J^*(k_0,\chi_j) = \inf_{\chi \in U^*_{ad}(\gamma)}J^*(k_0\chi)
\end{equation*}
and
\begin{equation*}
\hat{J}^*(\hat{\chi}^{opt}) = \lim_{j\to +\infty} \hat{J}^*(\hat{\chi}_j) = \inf_{\chi \in U^*_{ad}(\gamma)}\hat{J}^*(\chi).
\end{equation*}
In other words, $\chi^{opt},$ $\hat{\chi}^{opt} \in U^*_{ad}(\gamma)$ realize the minima of $J^*$ and $\hat{J}^*$ respectively on $U^*_{ad}(\gamma)$ (by a continuity on a compact).
In addition, $$\min_{\chi \in U^*_{ad}(\gamma)} J^*(k_0,\chi)=\inf_{\chi \in U_{ad}(\gamma)} J(k_0,\chi)$$ as $U^*_{ad}(\gamma)$ is the closure of $U_{ad}(\gamma)$ and $J^*$ takes the same values as $J$ on $U_{ad}(\gamma)$ (see~\cite[Theorem~3.2]{MAGOULES-2025}). In the same way, we conclude for $\hat{J}^*$.
\end{proof} 

\appendix

\section{Variational Formulation}\label{sec:variational formula}
The objective of this part of the Appendix is to prove the variational formulation Proposition~\ref{prop:FV}:
\begin{proposition} (Variational Formulation)

The variational formulation associated with \eqref{PH} can be expressed as:
\begin{equation}
\label{eq:FVappen}
    \forall q\in V(\Omega), \ A(p,q) = l(q);
\end{equation}
where we define the following forms:
\[\forall q\in V(\Omega), \, l(q) = (\eta,\Tr_\Gamma q)_{L^2(\Gamma,\mu)} -(f,q)_{L^2(\Omega),}\]
and $\forall p,q\in V(\Omega),$
\[ A(p,q) = (\nabla p,\nabla q)_{(L^2(\Omega))^3} - (\mathcal{D}p,\mathcal{D}q)_{L^2(\Omega)}+ik(\Tr_{\Gamma_{out}}p,\Tr_{\Gamma_{out}}q)_{L^2(\Gamma_{out},\mu)} \]
\[+ \ iY \frac{Z_0}{k_0}\Big[\langle \Tr_\Gamma(\mathcal{D}_1 p),\Tr_\Gamma(\mathcal{D}_1 q)\rangle_{\chi} - K^2\langle \Tr_\Gamma p, \Tr_\Gamma q \rangle_{\chi} \Big].\]
\end{proposition}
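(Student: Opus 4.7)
The plan is the classical ``multiply by a conjugated test function and integrate by parts'' derivation, adapted to the $d$-upper regular boundary measure framework. Let $p$ be a sufficiently regular solution of~\eqref{PH} and let $q \in V(\Omega)$. Multiplying $\Delta p + \mathcal{D}^2 p = f$ by $\bar q$ and integrating over $\Omega$, the generalized Green's formula~\eqref{greenformula} handles the Laplacian term, giving $-(\nabla p, \nabla q)_{(L^2(\Omega))^3} + \langle \partial_n p, \Tr_{\partial\Omega}\bar q\rangle$. For the convective term, $\mathcal{D} = k_0 - iM_0 \partial_x$ is formally self-adjoint modulo $n_x$-boundary contributions, and two successive applications of the Green-type formula~\eqref{EqGFnx} yield
\[
(\mathcal{D}^2 p, q)_{L^2(\Omega)} = (\mathcal{D} p, \mathcal{D} q)_{L^2(\Omega)} - iM_0 \langle \mathcal{D} p \cdot n_x, \Tr_{\partial\Omega}\bar q\rangle.
\]
Combining gives
\[
(\nabla p, \nabla q) - (\mathcal{D} p, \mathcal{D} q) + (f, q) = \langle \partial_n p - iM_0 \mathcal{D} p \cdot n_x, \Tr_{\partial\Omega}\bar q\rangle.
\]

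I next split this boundary pairing over $\partial\Omega = \Gamma_{in} \cup \Gamma \cup \Gamma_{out}$, legitimate by the zero-$\mu$-measure intersections~\eqref{EqDivisionDelOmega}. The $\Gamma_{in}$-contribution vanishes because $\Tr_{\Gamma_{in}} q = 0$ by the definition of $V(\Omega)$. On the lateral $\Gamma$ the outward normal is perpendicular to the $x$-axis, hence $n_x|_\Gamma = 0$; substituting the generalized Myers condition~\eqref{EqBcGammaChi} together with the decomposition $\mathcal{D}(\mathcal{D} + iM_0 \beta_v \partial_x) = \mathcal{D}_1^2 - K^2$ from~\eqref{squarediff} converts this piece into the source term $(\eta, \Tr q)_{L^2(\Gamma,\mu)}$, the algebraic contribution $iY \frac{Z_0}{k_0} K^2 \langle \Tr p, \Tr q\rangle_\chi$, and an unsymmetrized residual $-iY \frac{Z_0}{k_0} \int_\Gamma \chi \Tr(\mathcal{D}_1^2 p) \Tr \bar q \, d\mu$. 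On $\Gamma_{out}$ we have $n_x = 1$, and the impedance condition $\partial_n p = -ik \Tr p$ produces the pairing $-ik(\Tr p, \Tr q)_{L^2(\Gamma_{out},\mu)} - iM_0 (\Tr \mathcal{D} p, \Tr q)_{L^2(\Gamma_{out},\mu)}$; moving the first piece to the left-hand side reproduces the $+ik$ term of $A(p,q)$.

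The step I expect to be the main obstacle is the final symmetrization on $\Gamma$, namely
\[
\int_\Gamma \chi \Tr(\mathcal{D}_1^2 p) \Tr \bar q \, d\mu = \langle \Tr \mathcal{D}_1 p, \Tr \mathcal{D}_1 q\rangle_\chi,
\]
which transfers one $\mathcal{D}_1$ from $p$ onto the test function. The right-hand side is well-defined because the definition of $V(\Omega)$ forces $\mathcal{D}_1 p, \mathcal{D}_1 q \in H^1(\Omega)$, so $\Tr \mathcal{D}_1 p, \Tr \mathcal{D}_1 q \in L^2(\Gamma, \mu)$. The plan is a tangential integration by parts along $\Gamma$ in the $x$-direction (tangent to the lateral surface of the cylinder); the endpoint contributions at $x = 0$ vanish thanks to $\Tr_{\Gamma_{in}} q = 0$, and those at $x = L$ absorb exactly the leftover $-iM_0(\Tr \mathcal{D} p, \Tr q)_{L^2(\Gamma_{out},\mu)}$ from the previous step, using the physical relation $k = k_0/M_0$ coming from $k = \omega/u_0$, $k_0 = \omega/c_0$, $M_0 = u_0/c_0$ (which yields $\Tr \mathcal{D} p|_{\Gamma_{out}} = (k_0 - M_0 k)\Tr p = 0$ after the impedance condition). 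Since $\chi$ is only $L^\infty(\Gamma,\mu)$ and $\Gamma$ carries the $d$-upper regular measure, this IBP has to be justified by first establishing the identity for smooth data and then passing to the limit using the continuity of the trace operator in the relevant spaces. Once every term is collected on the left-hand side, direct comparison with~\eqref{formA} and~\eqref{formL} gives $A(p,q) = \ell(q)$ for every $q \in V(\Omega)$.
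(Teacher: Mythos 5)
Your derivation follows the same route as the paper's Appendix~\ref{sec:variational formula}: the generalized Green formulas \eqref{greenformula} and \eqref{EqGFnx}, the splitting of $\partial\Omega$ via \eqref{EqDivisionDelOmega} with $n_x|_{\Gamma}=0$ and $n_x|_{\Gamma_{out}}=1$, the decomposition \eqref{squarediff}, and a final integration by parts on $\Gamma$ to symmetrize the $\mathcal{D}_1^2$ term (a step the paper states just as tersely as you do), your one-step treatment of $(\mathcal{D}^2p,q)$ being merely a repackaging of the paper's term-by-term expansion that rests on the same identity $k_0=kM_0$. The only slip is the claim that the residual $-iM_0(\Tr_{\Gamma_{out}}(\mathcal{D}p),\Tr_{\Gamma_{out}}q)_{L^2(\Gamma_{out},\mu)}$ is ``absorbed'' by the $x=L$ endpoint of the tangential integration by parts on $\Gamma$ --- those endpoint contributions live on the $\mu$-null circle $\Gamma\cap\Gamma_{out}$ and cannot cancel a surface integral over the disk $\Gamma_{out}$; the term simply vanishes because $\Tr_{\Gamma_{out}}(\mathcal{D}p)=(k_0-M_0k)\Tr_{\Gamma_{out}}p=0$, which is exactly the cancellation the paper performs and which your own parenthetical already records.
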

\begin{proof}

Let $p\in H^1(\Omega)$ be a solution of \eqref{PH} and let $q\in V(\Omega)$ defined in \eqref{Space} be a test function.

Using Green's formula we find
\begin{align*}
    \int_{\Omega} \mathcal{D}^2 p\hspace{2pt} \overline{q}\hspace{2pt} d\lambda &= k_0^2\int_{\Omega} (1 - 2i\frac{M_0}{k_0}\partial_x - \frac{M_0^2}{k_0^2} \partial_x^2) p\hspace{2pt} \overline{q}\hspace{2pt} d\lambda \\
    &= k_0^2 \Biggl[ 
    \int_{\Omega} p \overline{q} d\lambda 
    - i \frac{M_0}{k_0} \langle p\cdot n_x,\Tr_{\partial \Omega} q\rangle_{B'(\partial \Omega),B(\partial \Omega)}  
    + i \frac{M_0}{k_0} \int_{\Omega} \partial_x \overline{q} \hspace{2pt} p \hspace{2pt} d \lambda \\
    &- i \frac{M_0}{k_0} \int_{\Omega} \partial_x \overline{p} \hspace{2pt} q \hspace{2pt} d \lambda
    + \frac{M_0^2}{k_0^2} \int_{\Omega} \partial_x p \hspace{2pt} \partial_x \overline{q} \hspace{2pt} d\lambda
    - \frac{M_0^2}{k_0^2} \langle \partial_x p\cdot n_x,\Tr_{\partial \Omega} q\rangle_{B'(\partial \Omega),B(\partial \Omega)}  
    \Biggr].
\end{align*}
Considering the different parts of $\partial \Omega = \Gamma \cup \Gamma_{in} \cup \Gamma_{out}$ satisfying~\eqref{EqDivisionDelOmega}, we first recall that $\Tr_{\Gamma_{in}} q = 0$. Furthermore, due to the geometry of our domain, as well as the regularity of the functions $p$ and $\partial_x p$, the analysis conducted in Remark~\ref{remarkregularity} can be applied. Finally, using that $n_x|_{\Gamma} = 0$ and $n_x|_{\Gamma_{out}} = 1$ $\mu$-a.e., we obtain the following:

\begin{align*}
     \int_{\Omega} \mathcal{D}^2 p\hspace{2pt} \overline{q}\hspace{2pt} d\lambda 
     &= \int_{\Omega} \mathcal{D} p\hspace{2pt} \overline{\mathcal{D}q}\hspace{2pt} d\lambda
    - i k_0 M_0 (\Tr_{\Gamma_{out}} p,\Tr _{\Gamma_{out}}q)_{L^2(\Gamma_{out},\mu)}
    \\&\quad- M_0^2 (\Tr_{\Gamma_{out}}(\partial_x p),\Tr_{\Gamma_{out}} q)_{L^2(\Gamma_{out},\mu)} \\
    &= (\mathcal{D}p, \mathcal{D}q)_{L^2(\Omega)} 
    + iM_0(kM_0 - k_0) (\Tr_{\Gamma_{out}} p,\Tr_{\Gamma_{out}} q)_{L^2(\Gamma_{out},\mu)}.
\end{align*}
Since $\displaystyle k_0 = \frac{w}{c_0} = \frac{w}{u_0} \frac{u_0}{c_0} = kM_0$, the equality simplifies to:
\[
\int_{\Omega} \mathcal{D}^2 p\hspace{2pt} \overline{q}\hspace{2pt} d\lambda = (\mathcal{D}p, \mathcal{D}q)_{L^2(\Omega)}.
\]
Similarly, due to the regularity of the normal derivative $\displaystyle \frac{\partial p}{\partial n}$ on the different parts of $\partial \Omega$, the analysis conducted in Remark~\ref{remarkregularity}, coupled with the generalized Green formula, yields:
\[
    \int_{\Omega} \Delta p\hspace{2pt} \overline{q}\hspace{2pt} d\lambda=-\int_{\Omega} \nabla p \nabla \overline{q}\hspace{2pt} d\lambda + \int_{\partial \Omega} \frac{\partial p}{\partial n} \Tr_{\partial \Omega}\overline{q} \hspace{2pt} d\mu.
\]
Then, decomposing the integral over $\partial \Omega = \Gamma_{out} \cup \Gamma_{in} \cup \Gamma$, and using the decomposition \eqref{squarediff},\eqref{D1andK}:

\begin{align*}
    \int_{\partial \Omega} \frac{\partial p}{\partial n} \Tr_{\partial\Omega} \overline{q}\hspace{2pt} d\mu &= 
    -ik(\Tr_{\Gamma_{out}}p,\Tr_{\Gamma_{out}}q)_{L^2(\Gamma_{out},\mu)} - iY \frac{Z_0}{k_0}
    \int_{\Gamma} \chi \Tr _\Gamma(\mathcal{D}(\mathcal{D} + iM_0\beta_v\partial_x) p) \Tr_\Gamma \overline{q} \hspace{2pt} d\mu \\ &\quad + (\eta,\Tr_\Gamma q)_{L^2(\Gamma,\mu)} \\
    &= -ik(\Tr_{\Gamma_{out}}p,\Tr_{\Gamma_{out}}q)_{L^2(\Gamma_{out},\mu)} - iY \frac{Z_0}{k_0}
    \int_{\Gamma} \chi \Tr_\Gamma((\mathcal{D}_1^2-K^2)  p) \Tr _\Gamma\overline{q} \hspace{2pt} d\mu\\ &\quad + (\eta,\Tr_\Gamma q)_{L^2(\Gamma,\mu)}.
\end{align*}
 Thus, by integration by parts on the term $\mathcal{D}_1^2$:
\begin{multline*}
\int_{\partial \Omega} \frac{\partial p}{\partial n}\Tr _{\partial \Omega}\overline{q}\hspace{2pt} d\mu = 
-ik(\Tr_{\Gamma_{out}}p,\Tr_{\Gamma_{out}}q)_{L^2(\Gamma_{out},\mu)} \\- iY \frac{Z_0}{k_0}\int_\Gamma  \chi (\Tr_\Gamma(\mathcal{D}_1 p)\Tr_\Gamma (\overline{\mathcal{D}_1 q})- K^2\Tr_\Gamma p\Tr_\Gamma \overline{q})\hspace{2pt} d\mu + (\eta,\Tr_\Gamma q)_{L^2(\Gamma,\mu)}.
\end{multline*}
Finally, using the $\langle \cdot ,\cdot \rangle_{\chi}$ notation, we get the following:
\[ \forall q\in V(\Omega), \quad (\nabla p,\nabla q)_{(L^2(\Omega))^3} - (\mathcal{D}p,\mathcal{D}q)_{L^2(\Omega)}+ik(\Tr_{\Gamma_{out}}p,\Tr_{\Gamma_{out}}q)_{L^2(\Gamma_{out},\mu)} \]
\[+ \ iY \frac{Z_0}{k_0}\Big[\langle \Tr_\Gamma(\mathcal{D}_1 p),\Tr_\Gamma(\mathcal{D}_1 q)\rangle_{\chi} - K^2\langle \Tr_\Gamma p,\Tr_\Gamma q \rangle_{\chi} \Big] = (\eta,\Tr_\Gamma q)_{L^2(\Gamma,\mu)}-(f,q)_{L^2(\Omega)},\]
which is the expected result. \end{proof}

\section{Limit Graphs of $\mathscr{B}_v$}\label{AppLimitGr}

We shall first define the notion of convergence of sets used here. We say that a family of subsets of $X$ $(A_r)_{r\in \bR}$ (indexed by $\bR$) converges to $A\subseteq X$ when $r\to \delta \in \overline{\bR}$, where $A$ is the set containing the $a\in X$ that follow the following property:
\[\exists V\in \mathcal{V}_\delta, \forall r\in V, a\in A_r. \]
Here $\mathcal{V}_\delta$ is the set of topological neighborhoods of $\delta \in \overline{\bR}$. This leads us to the following proposition:

\begin{proposition}
    Let $D_0 = D(0,1)$ and $D_1 = D(1,1)$ be the open balls of radius $1$ centered respectively around the complex numbers $0$ and $1$. We define the open half-spaces $\Im m_{> 0} = \{z\in \bC, \Im m(z) > 0\}$ and $\Im m_{< 0} = \{z\in \bC, \Im m(z) < 0\}$, and we write for simplicity $\displaystyle r = \frac{\Im m(Y)}{\Re e(Y)}$. We recall that
$\mathscr{B}_{v,r} = \{\beta\in D_0, \Re e(Y)\Re e(K^2) - \Im m(Y)\Im m(K^2) < 0\}$. Then the following holds:
    
    \noindent$(i)$ $\mathscr{B}_{v,r} \xrightarrow{r\to +\infty} D_0 \cap [(\Im m_{> 0} \cap \overline{D_1})\cup (\Im m_{< 0} \setminus D_1)].$
    
    \noindent$(ii)$ $\mathscr{B}_{v,r} \xrightarrow{r\to -\infty} D_0 \cap [(\Im m_{> 0} \setminus D_1)\cup (\Im m_{< 0} \cap \overline{D_1})].$
\end{proposition}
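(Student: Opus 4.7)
My plan is to reduce the definition of $\mathscr{B}_{v,r}$ to a single real inequality depending linearly on $r$, then carry out a case analysis based on the sign of $\Im m(K^2)$, and finally treat the degeneracy $\Im m(K^2)=0$ separately because the convergence in the statement is a liminf-type inclusion.

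First I would divide the defining inequality of $\mathscr{B}_{v,r}$ by $\Re e(Y)>0$ (which does not depend on $\beta_v$) to rewrite
\[
\beta_v\in\mathscr{B}_{v,r}\iff \beta_v\in D_0\text{ and }\Re e(K^2)-r\,\Im m(K^2)<0.
\]
Using the explicit formulas already recalled in Section~\ref{sec:wellposedness},
\[
\Re e(K^2)=\frac{k_0^2(\beta_R^2-\beta_I^2-\beta_R|\beta_v|^2)}{4|1-\beta_v|^2},\qquad
\Im m(K^2)=\frac{k_0^2\,\beta_I(2\beta_R-|\beta_v|^2)}{4|1-\beta_v|^2},
\]
the key geometric observation I would exploit is the identity
\[
2\beta_R-|\beta_v|^2=1-\bigl((\beta_R-1)^2+\beta_I^2\bigr),
\]
so that $2\beta_R-|\beta_v|^2>0$ if and only if $\beta_v\in D_1$. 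Hence the sign of $\Im m(K^2)$ is entirely encoded by the pair (sign of $\beta_I$, position of $\beta_v$ relative to $D_1$).

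Next I would unfold the notion of set convergence defined at the beginning of the appendix: $\beta_v$ lies in the $r\to+\infty$ limit if and only if there exists $R>0$ such that $\beta_v\in\mathscr{B}_{v,r}$ for every $r>R$. I would then split $D_0$ according to the sign of $\Im m(K^2)$. In the open region $\Im m(K^2)>0$, namely $(\Im m_{>0}\cap D_1)\cup(\Im m_{<0}\setminus\overline{D_1})$, the quantity $\Re e(K^2)-r\,\Im m(K^2)$ tends to $-\infty$ as $r\to+\infty$, so the inequality holds for all sufficiently large $r$ and the point belongs to the limit; in the open region $\Im m(K^2)<0$ the same quantity tends to $+\infty$ and the point is eventually excluded. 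The case $r\to-\infty$ is completely symmetric, swaps the two regions, and gives the mirror image appearing in statement~(ii).

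The main obstacle I anticipate is the boundary set $\{\Im m(K^2)=0\}$, which is the union of the real axis $\{\beta_I=0\}$ and the circle $\partial D_1$; on this set the inequality collapses to the $r$-independent condition $\Re e(K^2)<0$, which I must verify piece by piece. For real $\beta_v\in D_0\setminus\{0\}$ a direct substitution gives
\[
\Re e(K^2)=\frac{k_0^2\,\beta_R^2(1-\beta_R)}{4|1-\beta_v|^2}\geq 0
\]
since $|\beta_R|<1$, so such points are excluded from both limits. On the other hand, for $\beta_v\in\partial D_1\cap D_0\setminus\{0\}$, the identity $|\beta_v|^2=2\beta_R$ yields
\[
\Re e(K^2)=-\frac{k_0^2|\beta_v|^2}{4|1-\beta_v|^2}<0,
\]
so these points already lie in $\mathscr{B}_{v,r}$ for \emph{every} $r\in\bR$ and hence in both limits. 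This asymmetric treatment of the two pieces of $\{\Im m(K^2)=0\}$ is precisely what produces $\overline{D_1}$ in one half-plane and $\setminus D_1$ in the other in the statement. Assembling the three regions then yields exactly the sets claimed in (i) and (ii).
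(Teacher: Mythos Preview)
Your proposal is correct and follows essentially the same route as the paper: normalize the defining inequality (you divide by $\Re e(Y)>0$, the paper by $\Im m(Y)$, which is a harmless variation), use the identity $2\beta_R-|\beta_v|^2=1-|\beta_v-1|^2$ to read off the sign of $\Im m(K^2)$ geometrically, and then split into the three cases $\Im m(K^2)>0$, $<0$, $=0$, handling the degenerate set (real axis versus $\partial D_1$) exactly as the paper does. The only cosmetic difference is that the paper organizes the argument as a double inclusion while you run a single trichotomy over $D_0$; the content is the same.
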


\begin{proof}
Let us prove $(i)$, as the proof of $(ii)$ is analogous. Let $\beta_v = x+iy\in \bC$ be in the limit set of $\mathscr{B}_{v,r}$ as $r\to +\infty$. Then $x^2 + y^2 < 1$ and there exists $R > 0$ such that for all $r \geq R$, the following condition is satisfied: 
$$\Re e(Y)\Re e(K^2) - \Im m(Y)\Im m(K^2) < 0.$$
Recalling that $\Re e(Y)$ is always positive, we ensure that $\Im m(Y) > 0$ no matter the value of $r \geq R$ chosen, 
The condition is equivalent to
$$\Re e(K^2)\cdot \frac{1}{\frac{\Im m(Y)}{\Re e(Y)}} - \Im m(K^2)   < 0.$$
Thus taking $\displaystyle \frac{\Im m(Y)}{\Re e(Y)} \longrightarrow +\infty$ we must have
$$\Im m(K^2) \geq 0.$$
Let us recall that we have
\[\Re e(K^2) = \frac{k_0^2}{4|1-\beta_v|^2}(\beta_R^2-\beta_I^2-\beta_R|\beta_v|^2) = \frac{k_0^2}{4|1-\beta_v|^2}(x^2-y^2-x(x^2+y^2))\]
and
\[\Im m(K^2) = \frac{k_0^2}{4|1-\beta_v|^2}\beta_I(2\beta_R-|\beta_v|^2) = \frac{k_0^2}{4|1-\beta_v|^2}y(2x-x^2 - y^2).\]
Let us start by assuming $\Im m(K^2) > 0$. This assumption leads to:
\[y(2x-x^2 - y^2) > 0 \]
Rewriting $2x-x^2 - y^2 = 1 - \left((x-1)^2 + y^2\right)$, we have two cases:
\begin{itemize}
    \item If $\beta_v \in \Im m_{> 0} $, then $(x-1)^2 + y^2 < 1$, meaning $\beta_v \in D(1,1)$.
    \item If $\beta_v \in \Im m_{< 0} $, then $(x-1)^2 + y^2 > 1$, meaning $\beta_v \notin \overline{D(1,1)}$.
\end{itemize}
Otherwise, we have $\Im m(K^2) = 0$. Since $\beta_v\in \mathscr{B}_{v,R}$, this implies that $\Re e(K^2) < 0$ and thus
\[x^2-y^2-x(x^2+y^2) < 0.\]
If by absurd $y = 0$, then $x^2(1-x) < 0$, which implies that $x > 1$. That is absurd, thus $y \neq 0$ and by the expression of $\Im m(K^2)$, we have that $(x-1)^2 + y^2 = 1$, or $\beta_v \in \partial D(1,1)$.

We have thus proven that $\beta_v \in D_0 \cap [(\Im m_{> 0} \cap \overline{D_1})\cup (\Im m_{< 0} \setminus D_1)]$.

Conversely, let $\beta_v \in D_0 \cap [(\Im m_{> 0} \cap \overline{D_1})\cup (\Im m_{< 0} \setminus D_1)]$. Notice that we always have $\Im m (\beta_v) \neq 0$. Let us check separately the following cases:
\begin{itemize}
    \item If $\beta_v\notin \partial D_1$, then $ \beta_v \in (\Im m_{> 0} \cap D_1)\cup (\Im m_{< 0} \setminus \overline{D_1})$. Computing $\Im m(K^2)$ as done previously, we get that $\Im m(K^2) > 0$. Setting $\displaystyle R = 2\frac{\Re e(K^2)}{\Im m(K^2)}$, we get that $ \forall r\geq R, \beta_v\in \mathscr{B}_{v,r}$.
    \item Otherwise, $\beta_v\in \partial D_1$. It implies that $\Re e(\beta_v) > 0$, as well as $\Im m(K^2) = 0$ and $\displaystyle \Re e(K^2) = -\frac{k_0^2\Re e(\beta_v)}{2|1-\beta_v|^2} < 0$. Setting $R = 1$, we get that $\forall r\geq R, \beta_v\in \mathscr{B}_{v,r}$.
\end{itemize}
In any case, $\beta_v$ is in the limit set of $\mathscr{B}_{v,r}$ as $r\to +\infty$. 
\end{proof}


\section*{Acknowledgements}
The authors thank E. Savin and F. Simon for introducing the liner modeling problems in the aircraft and F. Magoulès, O. Pironneau, and C. Bardos,  A. Teplyaev for their enthusiasm and interest in the solved
problem. The authors thank P. Sourisse for working on the physical meaning of
the generalized Myers condition and G. Claret and S. Jegou for their first considerations of this type of problem under the supervision of A. Rozanova-Pierrat. 

\bibliographystyle{siam}
\label{bib:sec}
\bibliography{ref.bib}

\begin{thebibliography}{10}

\bibitem{ALLAIRE-2007}
{\scshape G.~{A}llaire}, {\em {C}onception optimale de structures}, 58
  {M}ath{\'e}matiques et {A}pplications, {S}pringer, 2007.

\bibitem{ALLAIRE-1997}
{\scshape G.~{A}llaire, E.~{B}onnetier, G.~{F}rancfort, and F.~{J}ouve}, {\em
  {S}hape optimization by the homogenization method}, {N}umerische
  {M}athematik, 76 (1997), pp.~27--68.

\bibitem{AUREGAN-2001}
{\scshape Y.~Aur{\'e}gan, R.~Starobinski, and V.~Pagneux}, {\em Influence of
  grazing flow and dissipation effects on the acoustic boundary conditions at a
  lined wall}, J. Acoust. Soc. Am., 109 (2001), pp.~59--64.

\bibitem{BANICHUK-1990}
{\scshape N.~{B}anichuk}, {\em {I}ntroduction to optimization of structures},
  {S}pringer {V}erlag, {N}ew {Y}ork, 1990.

\bibitem{BARDOS-1992}
{\scshape C.~{B}ardos, G.~{L}ebeau, and J.~{R}auch}, {\em {S}harp {S}ufficient
  {C}onditions for the {O}bservation, {C}ontrol, and {S}tabilization of {W}aves
  from the {B}oundary}, {S}{I}{A}{M} {J}ournal on {C}ontrol and {O}ptimization,
  30 (1992), pp.~1024--1065.

\bibitem{BARDOS-1994}
{\scshape C.~{B}ardos and J.~{R}auch}, {\em {V}ariational algorithms for the
  {H}elmholtz equation using time evolution and artificial boundaries},
  {A}symptotic {A}nalysis, 9 (1994), pp.~101--117.

\bibitem{BECACHE-2004}
{\scshape E.~B{\'e}cache, A.-S. B.-B. Dhia, and G.~Legendre}, {\em Perfectly
  matched layers for the convected helmholtz equation}, SIAM J. Numer. Anal.,
  42 (2004), pp.~409--433.

\bibitem{BIEGERT-2009}
{\scshape M.~{B}iegert}, {\em {O}n traces of {S}obolev functions on the
  boundary of extension domains}, {P}roceedings of the {A}merican
  {M}athematical {S}ociety, 137 (2009), pp.~4169--4176.

\bibitem{BRAMBLEY-2009}
{\scshape E.~J. Brambley}, {\em Fundamental problems with the model of uniform
  flow over acoustic linings}, Journal of Sound and Vibration, 322 (2009),
  pp.~1026--1037.

\bibitem{BRAMBLEY-2012}
\leavevmode\vrule height 2pt depth -1.6pt width 23pt, {\em Review of acoustic
  liner models with flow}, Acoustics 2012,  (2012).

\bibitem{BRAMBLEY-2013}
{\scshape E.~J. Brambley and G.~Gabard}, {\em Time domain simulations using the
  modified myers boundary condition}, in 19th AIAA/CEAS Aeroacoustics
  Conference, 2013.

\bibitem{BUCUR-2005}
{\scshape D.~{B}ucur and G.~{B}uttazzo}, {\em {V}ariational methods in shape
  optimization problems}, {B}irkh{\"a}user {B}oston, {M}{A}, 2005.

\bibitem{BUCUR-2016}
{\scshape D.~{B}ucur and A.~{G}iacomini}, {\em {S}hape optimization problems
  with {R}obin conditions on the free boundary}, {A}nnales de l'{I}nstitut
  {H}enri {P}oincar{\'e} {C}, {A}nalyse non lin{\'e}aire, 33 (2016),
  pp.~1539--1568.

\bibitem{CLARET-2023}
{\scshape G.~{C}laret, M.~{H}inz, A.~{R}ozanova{-}{P}ierrat, and
  A.~{T}eplyaev}, {\em {L}ayer potential operators for transmission problems on
  extension domains}, {P}reprint,  (2023).

\bibitem{LUNEVILLE-2002}
{\scshape A.-S. B.-B. Dhia, L.~Dahi, E.~Lun{\'e}ville, and V.~Pagneux}, {\em
  Acoustic diffraction by a plate in a uniform flow}, in Math. Models Methods
  Appl. Sci., vol.~12, 2002, pp.~625--647.

\bibitem{BONNET-2002}
{\scshape A.-S. B.-B. Dhia and V.~Pagneux}, {\em Acoustic diffraction by a
  plate in a uniform flow}, Mathematical Models and Methods in Applied
  Sciences, 12 (2002), pp.~625--647.

\bibitem{HASLINGER-2003}
{\scshape J.~{H}aslinger and R.~{M}{\"a}kinen}, {\em {I}ntroduction to shape
  optimization. {T}heory, approximation, and computation}, {S}{I}{A}{M},
  {P}hiladelphie, 2003.

\bibitem{HENROT-2005}
{\scshape A.~{H}enrot and M.~{P}ierre}, {\em {V}ariation et optimization de
  formes. {U}ne analyse g{\'e}om{\'e}trique}, {S}pringer, 2005.

\bibitem{HINZ-2021-1}
{\scshape M.~{H}inz, A.~{R}ozanova{-}{P}ierrat, and A.~{T}eplyaev}, {\em
  {N}on-{L}ipschitz {U}niform {D}omain {S}hape {O}ptimization in {L}inear
  {A}coustics}, {S}{I}{A}{M} {J}ournal on {C}ontrol and {O}ptimization, 59
  (2021), pp.~1007--1032.

\bibitem{HINZ-2021}
{\scshape M.~Hinz, A.~Rozanova-Pierrat, and A.~Teplyaev}, {\em Non-lipschitz
  uniform domain shape optimization in linear acoustics}, SIAM J. Control.
  Optim., 59 (2021), pp.~1007--1032.

\bibitem{HINZ-2023}
{\scshape M.~{H}inz, A.~{R}ozanova{-}{P}ierrat, and A.~{T}eplyaev}, {\em
  {B}oundary value problems on non-{L}ipschitz uniform domains: stability,
  compactness and the existence of optimal shapes}, {A}symptotic {A}nalysis,
  (2023), pp.~1--37.

\bibitem{INGARD-1959}
{\scshape K.~Ingard}, {\em Influence of fluid motion past a plane boundary on
  sound reflection, absorption, and transmission}, J. Acoust. Soc. Am., 31
  (1959), pp.~1035--1036.

\bibitem{LUNEVILLE-2014}
{\scshape E.~Lun{\'e}ville and J.-F. Mercier}, {\em Mathematical modeling of
  time-harmonic aerocoustics with a generalized impedance boundary condition},
  ESAIM, 48 (2014), pp.~1529--1555.

\bibitem{MA2020}
{\scshape X.~Ma and Z.~Su}, {\em Development of acoustic liner in aero engine:
  a review}, Sci. China Technol. Sci., 63 (2020), pp.~2491--2504.

\bibitem{MAGOULES-2025}
{\scshape F.~{M}agoul{\`e}s, M.~{M}enoux, and A.~{R}ozanova {P}ierrat}, {\em
  {F}requency range non-{L}ipschitz parametric optimization of a noise
  absorption}, {T}o appear in  {S}{I}{A}{M}  {J}. {C}ontrol {O}ptim. doi:
  10.1137/24{M}1691296,  (2025).

\bibitem{MAGOULES-2021}
{\scshape F.~Magoul{\`e}s, T.~P.~K. Nguyen, P.~Omnes, and A.~Rozanova-Pierrat},
  {\em Optimal absorption of acoustic waves by a boundary}, SIAM Journal on
  Control and Optimization, 59 (2021), pp.~561--583.

\bibitem{MATHEWS-2018}
{\scshape J.~R. Mathews, V.~Masson, S.~Moreau, and H.~Posson}, {\em The
  modified myers boundary condition for swirling flow}, J. Fluid Mech., 847
  (2018), pp.~868--906.

\bibitem{MAZ'JA-1985}
{\scshape V.~{M}az’ja}, {\em {S}obolev {S}paces}, {S}pringer {S}er. {S}ov.
  {M}ath., {S}pringer-{V}erlag, {B}erlin,, 1985.

\bibitem{MYERS-1980}
{\scshape M.~K. Myers}, {\em On the acoustic boundary condition in the presence
  of flow}, Journal of Sound and Vibration, 71 (1980), pp.~429--434.

\bibitem{PIRONNEAU-1984}
{\scshape O.~{P}ironneau}, {\em {O}ptimal shape design for elliptic systems},
  {S}pringer-{V}erlag, {N}ew {Y}ork, 1984.

\bibitem{REDON-2011}
{\scshape E.~Redon, A.-S. B.-B. Dhia, J.-F. Mercier, and S.~P. Sari}, {\em
  Non-reflecting boundary conditions for acoustic propagation in ducts with
  acoustic treatment and mean flow}, Int. J. Numer. Meth. Engng, 86 (2011),
  pp.~1360--1378.

\bibitem{RENOU-2011}
{\scshape Y.~Renou and Y.~Aur{\'e}gan}, {\em Failure of the ingard--myers
  boundary condition for a lined duct: An experimental investigation}, J.
  Acoust. Soc. Am., 130 (2011), pp.~52--60.

\bibitem{ROZANOVA-PIERRAT-2021}
{\scshape A.~{R}ozanova{-}{P}ierrat}, {\em {G}eneralization of
  {R}ellich-{K}ondrachov theorem and trace compacteness in the framework of
  irregular and fractal boundaries}, {M}.{R}. {L}ancia, {A}.
  {R}ozanova{-}{P}ierrat ({E}ds.), {F}ractals in engineering: {T}heoretical
  aspects and {N}umerical approximations, 8, {I}{C}{I}{A}{M} 2019 {S}{E}{M}{A}
  {S}{I}{M}{A}{I} {S}pringer {S}eries {S}pringer {I}ntl. {P}ubl., 2021.

\end{thebibliography}

\end{document}